\documentclass[12pt]{amsart}

\usepackage{amssymb}
\usepackage{amsmath}
\usepackage{amsthm}

\usepackage{color}

\usepackage[pagebackref,hypertexnames=false, colorlinks, citecolor=red, linkcolor=red]{hyperref} 
\usepackage[backrefs]{amsrefs}

\allowdisplaybreaks

\textwidth =158mm
\textheight =225mm
\oddsidemargin 2mm
\evensidemargin 2mm
\headheight=13pt
\setlength{\topmargin}{-0.6cm}

\parindent=13pt

\newcommand\R{\mathbb{R}}

\def\ls{\lesssim}

\newtheorem{thm}{Theorem}[section]
\newtheorem{lem}[thm]{Lemma}
\newtheorem{prop}[thm]{Proposition}

\newtheorem*{TheoremC}{Theorem C}

\numberwithin{equation}{section}

\begin{document}

\arraycolsep=1pt

\title[Two weight Inequality for the Poisson Operator on manifold with ends]{The Two Weight Inequality for the Poisson Semigroup
 on Manifold with Ends}

\author[X. Duong]{Xuan Thinh Duong}
\address{Xuan Thinh Duong, Department of Mathematics \& Statistics\\ Macquarie University\\ NSW, 2109, Australia.}
\email{xuan.duong@mq.edu.au}
\thanks{X. Duong's research supported by ARC DP 190100970. M.-Y. Lee's research supported by Ministry of Science and Technology, R.O.C. grant \#MOST 110-2115-M-008-009-MY2. J. Li's research supported by ARC DP 220100285. B. D. Wick's research supported by National Science Foundation DMS grant \#1800057 and ARC DP 190100970.}

\author[M.-Y. Lee]{Ming-Yi Lee}
\address{Ming-Yi Lee, Department of Mathematics\\ National Central University\\ Chung-Li, 320, Taiwan.}
\email{mylee@math.ncu.edu.tw}

\author[J. Li]{Ji Li}
\address{Ji Li, Department of Mathematics \& Statistics\\ Macquarie University\\ NSW, 2109, Australia.}
\email{ji.li@mq.edu.au}

\author[B. D. Wick]{Brett D. Wick}
\address{Brett D. Wick, Department of Mathematics \& Statistics\\ Washington University -- St. Louis\\ One Brookings Drive\\ St. Louis, MO USA 63130-4899}
\email{wick@math.wustl.edu}

\subjclass[2010]{Primary: 42B20}
\keywords{Manifold with ends, two weight inequality, Poisson semigroup}

\date{\today}

\begin{abstract}
Let $M = \mathbb R^m \sharp \mathcal R^n$ be a non-doubling manifold with two ends $\mathbb R^m$ and  $\mathcal R^n$,   $m > n \ge 3$.
Let $\Delta$ be the Laplace--Beltrami operator which is non-negative self-adjoint on $L^2(M)$. 
 We give testing conditions for  the two weight inequality for the Poisson semigroup $\mathsf{P}_t= e^{-t\sqrt{\Delta}}$ (generated by $\sqrt{\Delta}$) to hold in this setting.  
 In particular, we prove that for a measure $\mu$ on $M_{+}:=M\times (0,\infty)$ and $\sigma$ on $M$:
$$
\|\mathsf{P}_\sigma(f)\|_{L^2(M_{+};\mu)} \lesssim \|f\|_{L^2(M;\sigma)},
$$
with $\mathsf{P}_\sigma(f)(x,t):= \int_M \mathsf{P}_t(x,y)f(y) \,d\sigma(y)$ (with $\mathsf{P}_t(x,y)$ the Poisson kernel of $\mathsf{P}_t$),
if and only if testing conditions hold for the Poisson semigroup and its adjoint.  Further, the norm of the operator is shown to be equivalent to the best constants in these testing conditions.
\end{abstract}

\maketitle

\section{Introduction and Statement of Main Results\label{s1}}

The interest in the two weight problem stems from a range of applications arising in sophisticated arenas of complex function and spectral theory. 
Suppose $1\le p, q\le \infty, v(x)$ and $w(x)$ are nonnegative measurable functions (i.e. weights) on $\mathbb R^n$ and $\mathbb R^m$ respectively, and that $T$ is an operator taking suitable functions on $\mathbb R^n$ into functions on $\mathbb R^m$. 
In his survey article \cite{M}, Muckenhoupt raised the general question of characterizing when the weighted norm inequality,
$$\bigg(\int_{\mathbb R^m} |Tf(x)|^q w(x)dx \bigg)^{1/q}
 \le C \bigg(\int_{\mathbb R^n} |f(x)|^q v(x)dx\bigg)^{1/p},$$
holds for all appropriate $f$.
Sawyer first introduced testing conditions in \cite{s88-0} (which are now frequently referred to as Sawyer-type testing conditions) 
into the two weight setting for the maximal function, and later in \cite{s88} for the fractional and Poisson integral operators. We refer to \cite[Sec. 12]{L} for applications of two weight inequalities.

In this paper we provide a testing condition for the two weight inequality for the Poisson operator on a non-doubling manifold with ends studied by Grigor’yan and Saloff-Coste \cite{GS}. We note that this could provide a useful tool for the study of the two weight inequality for Riesz transforms in this setting, since the related result in the Euclidean setting plays an important role in the solution to the two weight conjecture for the Hilbert transform (due to Lacey--Sawyer--Shen--Uriarte-Tuero \cite{LSSU} and Lacey \cite{L}) and two weight theorem for $\alpha$-fractional singular integrals (see Sawyer--Shen--Uriarte-Tuero \cite{SSU}) and  for Riesz transforms (Lacey--Wick \cite{LW}).  See also some recent related progress \cite{Hy, GPSSU}.

Concerning the structure of the manifold with ends $M$, we refer the reader to \cite{GS, GIS}. 
The manifold $M$ is basically a copy of $\mathbb R^m$ connected  to $\mathbb R^n \times S^{m-n}$ smoothly by a compact set $K$ of diameter $1$ where
$S^{m-n}$ denotes the unit sphere in $\mathbb R^{m-n}$.  For any $x\in M$, define
$
   |x|:=\sup_{z\in K}d(x,z),
$
where $d=d(x,y)$ is the geodesic distance in $M$.
One can see that $|x|$ is separated from zero on $M$ and $|x|\approx 1+d(x,K)$.  For $x\in M$, let
$
   B(x,r):=\{y\in M: d(x,y)<r\}
$
be the geodesic ball with center $x\in M$ and radius $r>0$ and let
$
   V(x,r)=\nu(B(x,r))
$
where $\nu$ is the Riemannian measure on $M$. We observe that the function $V(x,r)$ satisfies:

(a) $V(x,r)\thickapprox r^m$ for all $x\in M$, when $r\leq 1$;

(b) $V(x,r)\thickapprox r^n$ for $B(x,r)\subset \R^n$, when $r> 1$; and

(c) $V(x,r)\thickapprox r^m$ for $x\in \R^n\backslash K$, $r>2|x|$, or $x\in \R^m$, $r>1$.\\
It is not difficult to check that $M$ does not satisfy the doubling condition.  Indeed, consider a
sequence of balls $B(x_k,r_k)\subset \R^n$ such that $r_k = |x_k| > 1$ and $r_k \rightarrow \infty$ as $k \rightarrow \infty$.
Then $V(x_k,r_k)\thickapprox r_k^n$. However, $V(x_k,2r_k)\thickapprox r_k^m$ and the doubling condition fails since $m>n$.  

Let $\Delta$  be the Laplace-Beltrami operator on $M$ and $e^{-t\sqrt{\Delta}}$ the Poisson semigroup generated by $\sqrt{\Delta}$. We denote 
by $\mathsf{P}_t(x,y)$ the  kernel of the Poisson semigroup $\{e^{-t\sqrt{\Delta}}\}_{t>0}$.

The aim of this paper is to provide a necessary and sufficient condition for a two weight inequality for the Poisson semigroup on a non-doubling manifold with ends.  We state the goal more precisely now. Let  $\sigma$ be a weight on $M$ and $\mu$ be a weight on $M_+=M\times (0,\infty)$.
Consider the inequality
\begin{align}\label{two weight}
\|\mathsf{P}_\sigma(f)\|_{L^2(M_{+};\mu)} \leq \mathcal{N}\|f\|_{L^2(M,\sigma)},
\end{align}
where
\begin{align*}
\mathsf{P}_\sigma(f)(x,t):= \int_M \mathsf{P}_t(x,y)f(y) \,d\sigma(y).
\end{align*}
We use $\mathsf{P}^{*}_\mu$ to denote the dual operator of $\mathsf{P}$, defined as follows
\begin{align*}
\left\langle \mathsf{P}_\sigma(f), g \right\rangle_{L^2(M_{+}^2;\mu)} &=\int_{M_{+}} \int_M \mathsf{P}_t(x,y)f(y) \,d\sigma(y) g(x,t)\,d\mu(x,t)\\
&= \int_M   \int_{M_{+}}  \mathsf{P}_t(x,y)g(x,t)\,d\mu(x,t)   f(y) \,d\sigma(y)\\
&= \int_M  \mathsf{P}^{*}_\mu(g)(y)f(y) \,d\sigma(y).
\end{align*}
So in particular,
\begin{align}\label{P*}
\mathsf{P}^{*}_\mu(g)(y):= \int_{M_{+}} \mathsf{P}_t(x,y)g(x,t) \,d\mu(x,t).
\end{align}
We also observe that a simple duality argument provides:
\begin{align}\label{two weight dual}
\left\Vert \mathsf{P}^{*}_\mu(\phi)\right\Vert_{L^2(M;\sigma)} \lesssim \mathcal{N}\left\Vert \phi\right\Vert_{L^2(M_{+};\mu)}.
\end{align}

The main result of this paper is the following two weight inequality for the Poisson operator $\{\mathsf{P}_t\}_{t>0}$. 
\begin{thm}Let $\sigma$ be a measure on $M$ and $\mu$ a measure on $M_{+}$.
The following conditions are equivalent:
\begin{enumerate}
\item[(1)] The two weight inequality \eqref{two weight} holds.  That is 
\begin{align*}
\|\mathsf{P}_\sigma(f)\|_{L^2(M_{+};\mu)} \leq \mathcal{N}\|f\|_{L^2(M,\sigma)}
\end{align*}
for some  non-negative constant $\mathcal N$; 
\item[(2)] The testing conditions below hold uniformly over all cubes $I\subset M$
\begin{align*}
\int_{\hat{3I}} \mathsf{P}_\sigma(1_I)(x,t)^2 \,d\mu(x,t) & \leq \mathcal{F}^2 \sigma(I),\\
\int_{3I} \mathsf{P}^{*}_\mu(t1_{\hat{I}})(y)^2 \,d\sigma(y) & \leq \mathcal{B}^2 \int_{\hat{I}} t^2 \,d\mu(x,t)
\end{align*}
for some  non-negative constants $\mathcal F$, $\mathcal B$.
\end{enumerate}
Moreover, we have that $\mathcal{N} \approx \mathcal{F}+\mathcal{B}$.
Here $1_I$ is the indicator of $I$, $\hat{I}= I\times [ 0, \ell(I) ]$.  
\end{thm}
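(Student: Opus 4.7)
The plan is to follow Sawyer's two-weight testing scheme adapted to the non-doubling geometry of $M$.

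The direction $(1) \Rightarrow (2)$ is immediate. For the first testing inequality, apply \eqref{two weight} to $f = 1_I$ and restrict the integral on the left to $\hat{3I} \subset M_{+}$. For the second, apply \eqref{two weight dual} to $\phi(x,t) = t\, 1_{\hat I}(x,t)$, restrict the $\sigma$-integral on the left to $3I$, and note that $\|\phi\|_{L^2(M_+;\mu)}^2 = \int_{\hat I} t^2\,d\mu(x,t)$.

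For $(2) \Rightarrow (1)$, I would proceed as follows. First, construct a system of dyadic cubes on $M$ via the Hyt\"onen--Kairema construction, which applies on each end of $M$ separately since each is geometrically doubling; pair each cube $I$ with its tent $\hat I = I \times [0,\ell(I)]$ in $M_{+}$. The goal is to bound the bilinear form
\begin{align*}
\Lambda(f,g) := \int_{M_+} \mathsf{P}_\sigma(f)(x,t)\,g(x,t)\,d\mu(x,t)
\end{align*}
by $(\mathcal F + \mathcal B)\,\|f\|_{L^2(M;\sigma)}\|g\|_{L^2(M_+;\mu)}$. Perform a Calder\'on--Zygmund stopping-time (corona) decomposition of $f$ with stopping rule based on $\sigma$-averages, and a parallel corona for $g$ with stopping rule based on $\mu$-averages on tents. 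Cover $M_+$ by the tents attached to the stopping cubes. The resulting double sum over pairs of corona cubes splits into a diagonal part (comparable cubes) and an off-diagonal part (well-separated cubes).

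The diagonal part is controlled by applying the testing constant $\mathcal F$ locally on each $\sigma$-stopping cube to estimate the $\mathsf{P}_\sigma$-side, paired with a Carleson embedding theorem that absorbs the stopping averages of $g$ into $\|g\|_{L^2(M_+;\mu)}$; the symmetric rearrangement uses $\mathcal B$ together with the adjoint $\mathsf{P}^*_\mu$ as in \eqref{P*}. The off-diagonal part is controlled by the Grigor'yan--Saloff-Coste Poisson kernel bound on $M$, roughly of the form
\begin{align*}
\mathsf{P}_t(x,y) \lesssim \frac{t}{V(x,t+d(x,y))\,(t+d(x,y))},
\end{align*}
which yields geometric decay in the separation between cubes, letting the off-diagonal sum telescope against a single testing-type quantity and then close up by Cauchy--Schwarz.

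The main obstacle will be the non-doubling nature of $M$: the volume function $V(x,r)$ transitions between $r^m$ and $r^n$ across the scale $r\approx 1$, and on the $\mathbb{R}^n$-end the doubling property fails outright. Consequently, both the kernel estimate above and the Carleson packing bound behind the corona construction must be formulated with the correct volume factor $V(x,t+d(x,y))$ rather than a single-power comparison, and the tail sums must be organized so that cubes straddling the compact joining set $K$ are handled separately. One must also verify that the specific choices $\hat{3I}$ and $3I$ in the testing conditions suffice to separate local from non-local contributions uniformly across both ends, and that the stopping-time coronas retain a Carleson packing property of the form $\sum_{I' \subset I} \sigma(I') \lesssim \sigma(I)$ even with the non-doubling measure.
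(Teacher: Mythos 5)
Your direction $(1)\Rightarrow(2)$ matches the paper. For $(2)\Rightarrow(1)$, however, there is a genuine gap at the point where your argument actually has to engage the geometry: the kernel bound you invoke for the off-diagonal part, $\mathsf{P}_t(x,y)\lesssim \frac{t}{V(x,t+d(x,y))(t+d(x,y))}$, is false on a manifold with ends. The sharp estimates (Theorem C, quoted from \cite{BDLW}) contain, besides the local term $\frac{1}{t^m}\big(\frac{t}{t+d(x,y)}\big)^{m+1}$, global terms such as $\frac{1}{t^n|x|^{m-2}}\big(\frac{t}{t+d(x,y)}\big)^{n+1}$ (for $x\in\mathbb R^m\setminus K$, $y\in\mathbb R^n\setminus K$) and $\frac{1}{t^n|x|^{m-2}|y|^{m-2}}\big(\frac{t}{t+|x|+|y|}\big)^{n+1}$ (for $x,y\in\mathbb R^m\setminus K$). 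For the first of these, take $|x|\approx 2$ fixed and $t\to\infty$: the term is of order $t^{-n}$, while your claimed bound is of order $t^{-m}$ with $m>n$, so no uniform comparison holds. Worse, the term $\big(\frac{t}{t+|x|+|y|}\big)^{n+1}$ has \emph{no decay in $d(x,y)$ at all}; for fixed $|x|,|y|$ it does not see the separation between $x$ and $y$. Your diagonal/off-diagonal dichotomy, which is powered entirely by geometric decay in the separation of cubes, therefore has nothing to act on for these pieces, and the ``telescoping against a single testing-type quantity'' step cannot be carried out as described. This is exactly the difficulty the paper flags: in three of the nine kernel cases the scaling in $t$ has dimension $n$ while the ambient end has dimension $m$, and the deficit must be compensated by the factors $|x|^{-(m-2)}$, $|y|^{-(m-2)}$, not by decay in $d(x,y)$.

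For comparison, the paper does not run a corona/bilinear-form argument at all. It dualizes to $\mathsf{P}^*_\mu$, splits the kernel into its sub-kernels $\mathsf{P}_{t,i,j}$ according to the nine positional cases, and for each sub-operator runs Sawyer's original scheme: level sets $\Omega_k=\{\mathsf{P}^*_{\mu,i,j}(\phi)>2^k\}$, Whitney decomposition, a maximum principle $\mathsf{P}^*_{\mu,i,j}(\phi\,1_{(3\hat I)^c})\le C\,2^k$ on Whitney cubes (which for the global kernels is proved using $|z|\approx|x|$ on $5I$ rather than $d$-comparisons), absorption of the cubes with $\sigma(F_k(I))<\delta\sigma(I)$, and then the forward and backward testing conditions together with principal cubes and an $L^2(\tilde\mu)$ dyadic maximal function. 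If you want to salvage a corona approach, you would at minimum have to decompose the operator into these sub-kernels first and design a separate ``off-diagonal'' mechanism (in the variables $|x|,|y|$ rather than $d(x,y)$) for the global pieces; as written, the proposal does not close.
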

It is immediate that the testing conditions are necessary and that $\mathcal{F}+\mathcal{B}\lesssim \mathcal{N}$.  The forward condition follows by testing \eqref{two weight} on an indicator function and restricting the region of integration.  The backward condition follows by testing the dual inequality \eqref{two weight dual} on the indicator of a set and then again restricting the integration.  In the remainder of the paper we address how to show that these testing conditions are sufficient to prove \eqref{two weight} and \eqref{two weight dual}.  In the course of the proof it will also be shown that $\mathcal{N}\lesssim \mathcal{F}+\mathcal{B}$.

We remark that in the setting of manifolds with end, the key difficulty is that the Poisson integrals are essentially different from those in the Euclidean setting. Instead of the standard Euclidean Poisson kernel, a sharp estimate on the Poisson kernels on manifolds with ends splits into 9 different cases according to the position of the variables $x$ and $y$. This will of course entail a case analysis in the proof given below.  Moreover, among those 9 cases, there are 3 cases where the dimension for the scaling (the variable $t$) is different from the dimension of the underlying end of the manifold. To overcome this, we implement a refined decomposition of the underlying end of the manifold, to make good use of the extra space variable in the denominator to compensate for difference between the dimension and scaling.

Throughout the paper we use the notation $X\lesssim Y$ to denote that there is an absolute constant $C$ so that $X\leq CY$.  If we write $X\approx Y$, then we mean that $X\lesssim Y$ and $Y\lesssim X$.  And, $:=$ means equal by definition.

\section{Proof of the two weight Inequality for  \texorpdfstring{$\left\{\mathsf{P}_t\right\}_{t>0}$}{the Bessel Poisson Operator}}
\label{s:MainResult}

We first recall properties of the Poisson kernel $\mathsf{P}_t(x,y)$ on a manifold with ends.

\begin{TheoremC}[\cite{BDLW}]
The Poisson kernel  $\mathsf{P}_t(x,y)$ satisfies the following estimates:

\begin{enumerate}
\item For $x,y\in K $,
$$ \mathsf{P}_t(x,y)\approx \frac{1}{t^m}\Big(\frac{t}{t+d(x,y)}\Big)^{m+ 1}+ \frac{1}{t^n}\Big(\frac{t}{t+d(x,y)}\Big)^{n+ 1}; $$

\item For $x\in \mathbb{R}^m\backslash K $, $y\in K$,
$$ \mathsf{P}_t(x,y)\approx \frac{1}{t^m}\Big(\frac{t}{t+d(x,y)}\Big)^{m+ 1} +\frac{1}{t^n|x|^{m-2}}\Big(\frac{t}{t+d(x,y)}\Big)^{n+ 1}; $$

\item For  $x\in \mathbb{R}^n\backslash K $, $y\in K$,
$$ \mathsf{P}_t(x,y)\approx \frac{1}{t^m}\Big(\frac{t}{t+d(x,y)}\Big)^{m+ 1}+ \frac{1}{t^n}\Big(\frac{t}{t+d(x,y)}\Big)^{n+ 1}; $$

\item For  $x\in \mathbb{R}^m\backslash K $, $y\in \mathbb{R}^n\backslash K $,
\begin{align*} 
\mathsf{P}_t(x,y)&\approx \frac{1}{t^m}\Big(\frac{t}{t+d(x,y)}\Big)^{m+ 1}+ \frac{1}{t^n|x|^{m-2}}\Big(\frac{t}{t+d(x,y)}\Big)^{n+1}\\
&\quad+\frac{1}{t^m|y|^{n-2}}\Big(\frac{t}{t+d(x,y)}\Big)^{m+ 1}; 
\end{align*}

\item For  $x,y\in \mathbb{R}^m\backslash K $,
$$ \mathsf{P}_t(x,y)\approx \frac{1}{t^m}\Big(\frac{t}{t+d(x,y)}\Big)^{m+ 1} +\frac{1}{t^n|x|^{m-2}|y|^{m-2}}\Big(\frac{t}{t+|x|+|y|}\Big)^{n+1};$$ 

\item For $x,y\in \mathbb{R}^n\backslash K $,
$$ \mathsf{P}_t(x,y)\approx \frac{1}{t^m}\Big(\frac{t}{t+d(x,y)}\Big)^{m+ 1}+ \frac{1}{t^n}\Big(\frac{t}{t+d(x,y)}\Big)^{n+ 1}. $$
\end{enumerate}
\end{TheoremC}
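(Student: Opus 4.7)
\medskip

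\noindent\textbf{Proof proposal for Theorem C.}
The natural route is to derive the Poisson kernel estimates from the sharp heat kernel estimates on $M$ via the subordination formula
\begin{equation*}
\mathsf{P}_t(x,y)=\frac{t}{2\sqrt{\pi}}\int_0^{\infty} e^{-t^2/(4s)}\,p_s(x,y)\,\frac{ds}{s^{3/2}},
\end{equation*}
where $p_s(x,y)$ denotes the heat kernel of $\Delta$ on $M$. The plan is therefore to first recall (from Grigor'yan--Saloff-Coste) the two-sided pointwise bounds for $p_s(x,y)$ on the manifold with ends. These heat kernel estimates already split into the same six geometric configurations of $(x,y)$ as appear in the statement, with each regime producing a sum of several Gaussian-type terms whose Gaussian factor is of the form $e^{-cd(x,y)^2/s}$ and whose polynomial prefactor carries the correct dimension ($m$ or $n$) together with the appropriate weights $|x|^{-(m-2)}$ or $|y|^{-(n-2)}$ that encode the harmonic profile of each end.

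Next, I would substitute each Gaussian piece into the subordination integral and invoke the elementary identity, for $k>0$,
\begin{equation*}
\int_0^{\infty} t\, e^{-t^2/(4s)}\,s^{-(k+3)/2}\,e^{-d^2/(cs)}\,ds\;\approx\;\frac{t}{(t+d)^{k+1}}\;=\;\frac{1}{t^k}\Bigl(\frac{t}{t+d}\Bigr)^{k+1},
\end{equation*}
which is obtained by splitting the integral at $s\sim t^2+d^2$ and bounding each half by a direct change of variable. Applied piece-by-piece, each $k$-dimensional Gaussian heat kernel term produces precisely a polynomial Poisson term of the shape $\frac{1}{t^k}\bigl(\frac{t}{t+d(x,y)}\bigr)^{k+1}$, and the $|x|^{-(m-2)}$, $|y|^{-(n-2)}$ prefactors simply pass through the $s$-integration since they are independent of $s$. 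Carrying this out in each of the six geometric configurations recovers all six formulas of the statement for the upper bound; the lower bound follows along the same lines by inserting the two-sided lower bound of $p_s$ and restricting the subordination integral to a suitable window $s\sim t^2+d(x,y)^2$.

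The main obstacle, and the only case needing genuine care, is in configurations (4) and (5): here the heat kernel already contains a ``mixed'' contribution in which the scaling in $s$ carries the \emph{wrong} dimension for one of the variables, namely an $s^{-n/2}$ factor when $x\in\mathbb R^m\setminus K$, compensated by $|x|^{-(m-2)}$ (and similarly with $y$). When one passes this mixed term through subordination, one must verify that the resulting Gaussian argument is $d(x,y)^2/s$ rather than, say, $(|x|+|y|)^2/s$; in case (5) the correct exponent is in fact $(t+|x|+|y|)^{-(n+1)}$ because both endpoints sit in the $\mathbb R^m$-end and the trajectory must pass through $K$, so $d(x,y)$ in the Gaussian effectively gets replaced by $|x|+|y|$ at the heat kernel level. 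Getting these replacements right in the subordination integral, and checking that the contributions from the different pieces of $p_s$ do not cancel or dominate each other in unwanted regimes, is the technical heart of the computation.

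Finally, I would collect the dominant terms in each regime and verify that the resulting sum matches the asserted expression in Theorem~C exactly, observing that lower-order pieces (e.g.\ a pure $m$-dimensional term in cases where an $n$-dimensional one is also present) are absorbed by the stated bound since the geometric conditions force $t+d(x,y)\ge 1$ and $|x|,|y|\ge 1$ in the regions where the weighted factors appear. This case-by-case bookkeeping, organized by the six geometric configurations, completes the two-sided estimate.
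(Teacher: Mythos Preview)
The paper does not actually prove Theorem~C; it is quoted verbatim from \cite{BDLW} and simply used as a black box input to the two-weight argument. So there is no ``paper's own proof'' to compare against here.

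That said, your proposed route---subordination of the Grigor'yan--Saloff-Coste heat kernel bounds---is exactly the approach taken in \cite{BDLW}, and your outline captures the main points correctly: the six geometric cases are inherited from the heat kernel, each Gaussian piece transforms under subordination into the corresponding Poisson-type term via the scaling identity you wrote, and the $|x|^{-(m-2)}$, $|y|^{-(n-2)}$ weights pass through. Your remark about case~(5), where the effective distance in the Gaussian is $|x|+|y|$ rather than $d(x,y)$ because any path must traverse $K$, is also on target and is the one place where the bookkeeping is genuinely delicate. The only thing to add is that the lower bounds require slightly more care than ``restricting to a window $s\sim t^2+d^2$'': one must check that on that window the dominant heat-kernel piece in each regime actually matches the Poisson term one is trying to bound from below, which again is a case-by-case verification.
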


We now prove that the testing conditions imply the norm inequality for the Poisson operator $\mathsf{P}_t$ on non-doubling manifold with ends.  The general proof strategy is to follow the line of Sawyer's original argument, \cite{s88}, and use some techniques contained in the proof given by Lacey \cite{L}.  Additional techniques developed here adapting to the Poisson kernel upper bound and the non-doubling measure are also exploited.

To begin with, we assume that $\sigma$ is restricted to some large cube $I_0\subset M_+$, and  that $\mu$ is restricted to $3\hat{I}_0$.  There is no loss in assuming that the measures $\sigma$ and $\mu$ are compactly supported since the resulting estimates will not depend upon the support in any way, and we can then pass to the general case through a standard limiting argument.

To prove \eqref{two weight}, by duality, it suffices to prove that
\begin{align}\label{two weight p}
\int_{M} | \mathsf{P}^{*}_\mu(\phi)(x)|^2 \,d\sigma(x) \lesssim \left(\mathcal{F}^2+\mathcal{B}^2\right)  
\int_{M_{+}} | \phi(y,t)|^2 \,d\mu(y,t).
\end{align}

We split the left-hand side of \eqref{two weight p} as follows.
\begin{align}
&\int_{M} | \mathsf{P}^{*}_\mu(\phi)(x)|^2 \,d\sigma(x)\\
&=\int_{\R^m\backslash K} | \mathsf{P}^{*}_\mu(\phi)(x)|^2 \,d\sigma(x)+\int_{\R^n\backslash K} | \mathsf{P}^{*}_\mu(\phi)(x)|^2 \,d\sigma(x)+\int_{K} | \mathsf{P}^{*}_\mu(\phi)(x)|^2 \,d\sigma(x) \nonumber\\
&=:T_1+T_2+T_3.\nonumber
\end{align}
It suffices to then prove that $T_j\lesssim \left(\mathcal{F}^2+\mathcal{B}^2\right)  \int_{M_{+}} | \phi(y,t)|^2 \,d\mu(y,t) $ for $j=1,2,3$ as this would then prove the main result.

\subsection{Estimate for $T_{1}$}\label{Estimate for $T_{1}$}
Note that in this case, 
from the definition of $P^*$ in \eqref{P*}, we have that for $x\in \R^m\backslash K$,
\begin{align*}
 &\mathsf{P}^{*}_\mu(\phi)(x)\\
 &=  \int_{M_{+}} \mathsf{P}_t(y,x)\phi(y,t) \,d\mu(y,t)\\
 &\le  \int_{M_{+}} \bigg(\frac{1}{t^m}\Big(\frac{t}{t+d(y,x)}\Big)^{m+ 1} +\frac{1}{t^n|x|^{m-2}}\Big(\frac{t}{t+d(x,y)}\Big)^{n+ 1} \\
 &\qquad +\frac{1}{t^m|y|^{n-2}}\Big(\frac{t}{t+d(x,y)}\Big)^{m+ 1}+\frac{1}{t^n|y|^{m-2}|x|^{m-2}}\Big(\frac{t}{t+|y|+|x|}\Big)^{n+1}\bigg)\phi(y,t) \,d\mu(y,t)\\
 &=:\mathsf{P}^{*}_{\mu,1,1}(\phi)(x)+\mathsf{P}^{*}_{\mu,1,2}(\phi)(x)+\mathsf{P}^{*}_{\mu,1,3}(\phi)(x)+\mathsf{P}^{*}_{\mu,1,4}(\phi)(x),
\end{align*}
where $\mathsf{P}^{*}_{\mu,1,1}$ is the operator associated to the integral kernel
$$ \mathsf{P}_{t,1,1}(y,x)=\frac{1}{t^m}\Big(\frac{t}{t+d(y,x)}\Big)^{m+ 1};  $$ 
$\mathsf{P}^{*}_{\mu,1,2}$ is the operator associated to the integral kernel
$$  \mathsf{P}_{t,1,2}(y,x)=\frac{1}{t^n|x|^{m-2}}\Big(\frac{t}{t+d(x,y)}\Big)^{n+ 1};$$
 $\mathsf{P}^{*}_{\mu,1,3}$ is the operator associated to the integral kernel
$$ \mathsf{P}_{t,1,3}(y,x)=\frac{1}{t^m|y|^{n-2}}\Big(\frac{t}{t+d(x,y)}\Big)^{m+ 1};  $$ 
and $\mathsf{P}^{*}_{\mu,1,4}$ is the operator associated to the integral kernel
$$  \mathsf{P}_{t,1,4}(y,x)=\frac{1}{t^n|y|^{m-2}|x|^{m-2}}\Big(\frac{t}{t+|y|+|x|}\Big)^{n+1}.$$
From this decomposition we have
\begin{align*}
T_1&\ls \int_{\R^m\backslash K} | \mathsf{P}^{*}_{\mu,1,1}(\phi)(x)|^2 \,d\sigma(x)
+\int_{\R^m\backslash K} | \mathsf{P}^{*}_{\mu,1,2}(\phi)(x)|^2 \,d\sigma(x)\\
&\qquad+  \int_{\R^m\backslash K} | \mathsf{P}^{*}_{\mu,1,3}(\phi)(x)|^2 \,d\sigma(x)
+\int_{\R^m\backslash K} | \mathsf{P}^{*}_{\mu,1,4}(\phi)(x)|^2 \,d\sigma(x)\\
&=:T_{1,1}+T_{1,2}+T_{1,3}+T_{1,4}.
\end{align*}

Before proving our main results, we require the analogous collection of dyadic cubes on spaces of homogeneous type as shown by Christ \cite[Theorem 11 and Lemma 15]{Chr}. This dyadic structure was independently obtained by Sawyer and Wheeden \cite{SW}.

\begin{prop}\label{pro Christ}
Let $(X,d,\mu )$ be a space
  of homogeneous type. There exists a collection of open subsets
  $\{Q_k^j\subset X: j\in \Bbb Z, k\in I_j\}$,
  where $I_j$ is a (finite or infinite) index set
  depending on $j,$ and constants $\delta \in (0, 1)$, $a_0>0$, $\eta >0$, $C_1$ and
  $C_2>0$ such that
\begin{enumerate}
\item[(i)] $\mu \big(X \setminus \bigcup \limits_{k \in I_j} Q_k^j \big)= 0$ for each fixed $j$;
\item[(ii)] $Q_k^j\cap Q_{k^\prime }^j= \varnothing \ \hbox{\hskip .05cm if} \hskip .2cm k\not= k';$
\item[(iii)] for any given $Q_k^j$ and $Q_\ell^{j^\prime }$ with $j>j^\prime$,
either $Q_k^j \subset Q_\ell^{j'}$ or $Q_k^j\cap Q_\ell^{j'}=\varnothing ;$
\item[(iv)] for each $(j, k)$ and any $j^\prime <j$, there is a unique $\ell \in I_{j'}$
such that $ Q_k^j\subset Q_\ell^{j^\prime };$
\item[(v)] for each $ Q_k^j$, $\text{\rm diam}(Q_k^j)\leq C_1\delta ^j;$
\item[(vi)] each $Q_k^j$ contains a ball $B(y_k^j, a_0\delta ^j),$ where $y_k^j \in Q_k^j;$
\item[(vii)] $\mu\{ x \in Q_k^j: d(x,X \setminus Q_k^j) \le t \delta^j\}\le C_2 t^\eta \mu(Q_k^j) \quad
  \forall j, k, \ \forall t>0.$
\end{enumerate}
\end{prop}

Properties (i) -- (iv) of Proposition \ref{pro Christ} show that all these subsets
  have the same properties as dyadic cubes in $\Bbb R^n.$ Property (v)
  implies that all these $Q_k^j$ with the same $j$ may have different measures;
  however, (v) and (vi) show that they have almost the same measures. That is, for
  each $j\in \Bbb Z,$ and $k,\ell \in I_j,$ $\mu (Q_k^j)\approx \mu (Q_\ell^j)$.
We will call these subsets $Q_k^j,$ $j\in \Bbb Z$
  and $k\in I_j,$ the {\it dyadic cubes on the spaces of homogeneous type}. 
 In fact, we can think of $Q_k^j$ as being a dyadic cube with diameter roughly $\delta^j$
  centered at $y_k^j$. As a result, we consider $CQ_k^j$ to be the dyadic cube with the same center as $Q_k^j$ and diameter $C$diam($Q_k^j$) for some constant $C$.
 
As open subset of Euclidean space has a Whitney decomposition from a system of dyadic
cubes, Seo \cite{Seo} obtained a Whitney decomposition from a system of Christ cubes.

\begin{lem}\label{WD}
Suppose that $(X,d,\mu )$ is an $A$-uniformly perfect metric space supporting a doubling metric measure, $Y$ is a closed subset of $X$,
and $\Omega=X\backslash Y$. Then $\Omega$ has a Whitney decomposition $M_{\Omega}$ satisfying the following properties:
\begin{enumerate}
\item[(1)] $\mu \big(X \setminus \bigcup \limits_{Q\in M_{\Omega}} Q \big)= 0$.
\item[(2)] \rm{diam}$(Q)\le $ \rm{dist}$(Q,Y)\le 4C_1\delta^{-1}$  \rm{diam}$(Q)$.
\item[(3)] $Q\cap Q'=\emptyset$.
\item[(4)] For any $Q\in M_\Omega$, there exists $x\in \Omega$ such that $B(x,a_0\delta^k)\subseteq Q \subseteq B(x,C_1\delta)$ for some $k$.
\end{enumerate}
The constants $\delta, a_0$ and $C_1$ are deduced from Proposition \ref{pro Christ}.
A metric space $(X,d)$ is $A$-uniformly perfect if there exists a constant $A>0$ such that
for each $x\in X$ and $0<r<\rm{diam}\,X$ there is a point $y\in X$ which satisfies $A^{-1}r\le d(x,y)\le r$.
\end{lem}
   
\subsubsection{Term $T_{1,1}$}\label{T11}
Set
\begin{align*}
\Omega_{k,1,1}&:=\left\{ x\in\R^m\backslash K: \mathsf{P}^{*}_{\mu,1,1}(\phi)(x)  >2^k \right\}.
\end{align*}

Let $\ell$ be a large constant to determined later. 
Since $(\R^m\backslash K, \|\cdot\|)$ is $1$-uniformly perfect with respect to the Lebesgue measure, we can apply the Whitney decomposition
to $\R^m\backslash K$  to get
$$\R^m\backslash K = \bigcup_{I} I,$$
where these $I$'s are dyadic cubes from Lemma \ref{WD}. 
Then we have
\begin{align*}
\int_{\R^m\backslash K} | \mathsf{P}^{*}_{\mu,1,1}(\phi)(x)|^2 \,d\sigma(x) 
&= \sum_{k\in\mathbb{Z}}\int_{(\Omega_{k+\ell,1,1}\backslash\Omega_{k+\ell+1,1,1})\cap \R^m\backslash K} | \mathsf{P}^{*}_{\mu,1,1}(\phi)(x)|^2 \,d\sigma(x) \\
&\lesssim  \sum_{k\in\mathbb{Z}} 2^{2k}\sigma\big((\Omega_{k+\ell,1,1}\backslash\Omega_{k+\ell+1,1,1})\cap \R^m\backslash K\big)\\
&=\sum_{k\in\mathbb{Z}} 2^{2k} \sum_{I\in\mathcal{I}_{k,1,1}}\sigma(I\cap (\Omega_{k+\ell,1,1}\backslash\Omega_{k+\ell+1,1,1})),
\end{align*}
where  $\mathcal{I}_{k,1,1}$ is a Whitney decomposition of $\Omega_{k,1,1}$.
Set $F_{k,1,1}(I):= I \cap (\Omega_{k+\ell,1,1}\backslash\Omega_{k+\ell+1,1,1}), I\in \mathcal{I}_{k,1,1}$ and let $\delta\in(0,1)$, to be chosen sufficiently small momentarily later.  Then we have
\begin{align*}
&\int_{\R^m\backslash K} | \mathsf{P}^{*}_{\mu,1,1}(\phi)(x)|^2 \,d\sigma(x) \\
&=\sum_{k\in\mathbb{Z}} 2^{2k} \sum_{\substack{I\in\mathcal{I}_{k,1,1}\\ \sigma(F_{k,1,1}(I))<\delta \sigma(I) }}\sigma(F_{k,1,1}(I))
+\sum_{k\in\mathbb{Z}} 2^{2k} \sum_{\substack{I\in\mathcal{I}_{k,1,1}\\ \sigma(F_{k,1,1}(I))\geq\delta \sigma(I) }}\sigma(F_{k,1,1}(I))\\
&=:A_{1,1}+B_{1,1}.
\end{align*}
As for the term $A_{1,1}$, it is obvious that 
\begin{align*}
A_{1,1}
&\leq \delta\sum_{k\in\mathbb{Z}} 2^{2k} \sum_{\substack{I\in\mathcal{I}_{k,1,1}\sigma(F_{k,1,1}(I))<\delta \sigma(I) }}\sigma(I) \\
&\leq  \delta\sum_{k\in\mathbb{Z}} 2^{2k} \sigma(\Omega_{k,1,1}\cap \R^m\backslash K) \leq  \delta \int_{\R^m\backslash K} | \mathsf{P}^{*}_{\mu,1,1}(\phi_1)(x)|^2 \,d\sigma(x),
\end{align*}
which will be absorbed into the left-hand side provided that $\delta$ is sufficiently small.  Thus it remains to show that term $B_{1,1}$ can be dominated in terms of the testing conditions.  

To continue, we first show that the operator $\mathsf{P}^{*}_{\mu,1,1}$ satisfies the following maximum principle.
\begin{lem}\label{lem P*}
There exists a positive constant $C_0$ such that
\begin{align}
\mathsf{P}^{*}_{\mu,1,1}(\phi\cdot 1_{(3\hat{I})^c} )(x) < C_02^k
\end{align}
for all $x\in I$, $I\in\mathcal{I}_{k,1,1}$ and $k\in\mathbb{Z}$.
\end{lem}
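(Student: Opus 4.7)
The plan is to establish a Sawyer-type Whitney-cube maximum principle: produce, from the Whitney structure of $\Omega_{k,1,1}$, a nearby reference point $x_{I}$ at which the scalar bound $\mathsf{P}^{*}_{\mu,1,1}(\phi_{1})(x_{I})\le 2^{k}$ already holds, and then transfer that pointwise bound to the whole of $I$ via a kernel comparison on the far region $(3\hat{I})^{c}$.

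The first step uses that $\mathcal{I}_{k,1,1}$ is a Christ-type Whitney decomposition of the open level set $\Omega_{k,1,1}\subset\mathbb{R}^{m}\setminus K$. Consequently each $I\in\mathcal{I}_{k,1,1}$ satisfies $\mathrm{dist}(I,\,(\mathbb{R}^{m}\setminus K)\setminus\Omega_{k,1,1})\approx\ell(I)$, so one can select $x_{I}\in(\mathbb{R}^{m}\setminus K)\setminus\Omega_{k,1,1}$ with $d(x_{I},I)\lesssim\ell(I)$. By the very definition of $\Omega_{k,1,1}$ this gives $\mathsf{P}^{*}_{\mu,1,1}(\phi_{1})(x_{I})\le 2^{k}$.

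The second step is the pointwise kernel comparison
\[
\mathsf{P}_{t,1,1}(y,x)=\frac{t}{(t+d(y,x))^{m+1}}\;\lesssim\;\frac{t}{(t+d(y,x_{I}))^{m+1}}=\mathsf{P}_{t,1,1}(y,x_{I})
\]
for $x\in I$ and $(y,t)\in(3\hat{I})^{c}$. The condition $(y,t)\notin 3\hat{I}=3I\times[0,3\ell(I)]$ forces either $d(y,x)\gtrsim\ell(I)$ or $t\ge 3\ell(I)$, and combined with $d(x,x_{I})\lesssim\ell(I)$ the triangle inequality then yields $t+d(y,x)\approx t+d(y,x_{I})$; since the kernel depends only on this sum, the comparison follows. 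Integrating against $\phi_{1}\,d\mu$ over $(3\hat{I})^{c}$ and enlarging the domain to all of $M_{+}$ gives
\[
\mathsf{P}^{*}_{\mu,1,1}(\phi_{1}\cdot 1_{(3\hat{I})^{c}})(x)\;\lesssim\;\mathsf{P}^{*}_{\mu,1,1}(\phi_{1})(x_{I})\;\le\;2^{k},
\]
which is the asserted bound with $C_{0}$ equal to the implicit constant.

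The one mildly delicate point I would watch is locating $x_{I}$ inside $\mathbb{R}^{m}\setminus K$, so that the kernel regime governing $\mathsf{P}^{*}_{\mu,1,1}$ at $x_{I}$ is indeed Case~5 of Theorem~C rather than a different branch of the nine-case Poisson bound. Because $\Omega_{k,1,1}$ is by definition a subset of $\mathbb{R}^{m}\setminus K$, performing the Whitney decomposition intrinsically inside $\mathbb{R}^{m}\setminus K$ makes this automatic. Apart from this routine bookkeeping, the proof is a standard Whitney/maximum-principle argument; the genuine analytic work of the section lies in the subsequent case-by-case estimates over the nine kernel regimes $P_{t,i,j}$, not in this lemma.
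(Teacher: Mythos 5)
Your proof is correct and follows essentially the same route as the paper's: choose a reference point outside the level set $\Omega_{k,1,1}$ at distance $\approx\ell(I)$ from $I$ (the paper takes $z\in 5I\cap\Omega_{k}^{c}$, using $5I\not\subset\Omega_k$), verify the kernel comparison $\mathsf{P}_{t,1,1}(y,x)\lesssim\mathsf{P}_{t,1,1}(y,z)$ on $(3\hat I)^c$ by exactly the same dichotomy ($y\notin 3I$ forces $d(x,y)\gtrsim\ell(I)$, otherwise $t>3\ell(I)$), and integrate using positivity. The only cosmetic difference is that you assert the two-sided equivalence $t+d(y,x)\approx t+d(y,x_I)$ where only the one-sided bound is needed, which your triangle-inequality argument does deliver.
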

\begin{proof}
Note that $I$ are Whitney cubes, satisfying $3I \subset \Omega_k$ and 
$9C_1\delta^{-1}I\not\subset\Omega_k$. We now choose $z\in (9C_1\delta^{-1}I\cap \Omega_k^c)$. Then we obtain that
$\ell(I)<d(z,x)<5C_1\delta^{-1}\ell(I)$. Recall that $\mathsf{P}^{*}_{\mu,1,1}$ is the operator associated to the integral kernel
$ \mathsf{P}_{t,1,1}(x,y)$.
Then it is clear that  for $z\in 9C_1\delta^{-1}I\cap \Omega_{k,1,1}^c$ and for every $y$ with $(y,t)\not\in 3\hat{I}$,
there holds
\begin{align}\label{claim P1}
\mathsf{P}_{t,1,1}(x,y)\leq C_0\mathsf{P}_{t,1,1}(z,y).
\end{align}
In fact, if $y\notin 3I$, we have $d(x,y)>\ell(I)$ and hence
\begin{align*}
t+d(z,y)&\le t+d(z,x)+d(x,y) \le t+5C_1\delta^{-1}\ell(I)+d(x,y)\\
            &\le t+6C_1\delta^{-1}d(x,y);
\end{align*}
if $y\in 3I$ then $(y,t)\not\in 3\hat{I}$ gives that $t>3\ell(I)$. Hence
\begin{align*}
t+d(z,y)&\le t+d(z,x)+d(x,y) \le t+5C_1\delta^{-1}\ell(I)+d(x,y)
            \le 6C_1\delta^{-1}t+d(x,y).
\end{align*}
Therefore,
$$\mathsf{P}_{t,1,1}(x,y)=\frac{1}{t^m}\Big(\frac{t}{t+d(y,x)}\Big)^{m+ 1}\le C_0
\frac{1}{t^m}\Big(\frac{t}{t+d(y,z)}\Big)^{m+ 1}=C_0\mathsf{P}_{t,1,1}(z,y).$$
Now we multiply by $\phi(y,t) 1_{(3\hat{I})^c}$ and then integrate with respect
to $d\mu(y,t)$. As a consequence, we have
\begin{align*}
\mathsf{P}^{*}_{\mu,1,1}(\phi\cdot 1_{(3\hat{I})^c} )(x)\leq C_0 \mathsf{P}^{*}_{\mu,1,1}(\phi\cdot 1_{(3\hat{I})^c} )(z) \leq C_02^k,
\end{align*}
completing the proof of Lemma \ref{lem P*}.
\end{proof}

Now for $I\in\mathcal{I}_{k,1,1}$ with $\sigma(F_{k,1,1}(I))\geq\delta \sigma(I) $ and for each $x\in F_{k,1,1}(I)$, it follows from
the above lemma that
\begin{align*}
\mathsf{P}^{*}_{\mu,1,1}(\phi \cdot 1_{3\hat{I}} )(x) &=
\mathsf{P}^{*}_{\mu,1,1}(\phi)(x) -  \mathsf{P}^{*}_{\mu,1,1}(\phi\cdot 1_{(3\hat{I})^c} )(x) 
\geq 2^{k+\ell} - C_02^k,
\end{align*}
where $C_0$ is the constant from Lemma \ref{lem P*}.
By choosing $\ell$ such that $2^\ell>C_0+1$, we obtain that 
\begin{align*}
\mathsf{P}^{*}_{\mu,1,1}(\phi\cdot 1_{3\hat{I}} )(x) \geq 2^k. 
\end{align*}
Hence,
\begin{align*}
2^k &\leq \frac{1}{\sigma(F_{k,1,1}(I))} \int _{F_{k,1,1}(I)} \mathsf{P}^{*}_{\mu,1,1}(\phi_1\cdot 1_{3\hat{I}} )(x)\,d\sigma(x) \\
&=\frac{1}{\sigma(F_{k,1,1}(I))}  \int_{3\hat{I}} \mathsf{P}_{\sigma,1,1}( 1_{F_{k,1,1}(I)} )(x,t) \phi(x,t) \,d\mu(x,t)\\
&=\frac{1}{\sigma(F_{k,1,1}(I))}  \int_{3\hat{I}\backslash \hat{\Omega}_{k+\ell+1,1,1} } \mathsf{P}_{\sigma,1,1}( 1_{F_{k,1,1}(I)} )(x,t) \phi(x,t) \,d\mu(x,t)\\
&\quad+\frac{1}{\sigma(F_{k,1,1}(I))}  \int_{3\hat{I} \cap  \hat{\Omega}_{k+\ell+1,1,1}} \mathsf{P}_{\sigma,1,1}( 1_{F_{k,1,1}(I)} )(x,t) \phi(x,t) \,d\mu(x,t)\\
&=: B_{1,1,1}(k,I)+B_{1,1,2}(k,I),
\end{align*}
where $\mathsf{P}_{\sigma,1,1}$ is the dual operator of $\mathsf{P}^{*}_{\mu,1,1}$.  And so we now have
\begin{align*}
B_{1,1}
&\leq 2\sum_{k\in\mathbb{Z}}  \sum_{\substack{I\in\mathcal{I}_{k,1,1}\\ \sigma(F_{k,1,1}(I))\geq\delta \sigma(I) }}
B_{1,1,1}(k,I)^2\sigma(F_{k,1,1}(I)) \\
&\qquad +2\sum_{k\in\mathbb{Z}}  \sum_{\substack{I\in\mathcal{I}_{k,1,1}\\ \sigma(F_{k,1,1}(I))\geq\delta \sigma(I) }}
B_{1,1,2}(k,I)^2\sigma(F_{k,1,1}(I))\\
&=:B_{1,1,1}+B_{1,1,2}.
\end{align*}
We seek to prove that:
\begin{equation*}
B_{1,1,1}+B_{1,1,2}\lesssim \left(\mathcal{F}^2+\mathcal{B}^2\right)\left\Vert \phi\right\Vert_{L^2(M_{+};\mu)}^2.
\end{equation*}
And this will be accomplished by showing:
\begin{eqnarray}
B_{1,1,1} & \lesssim & \delta^{-2}\mathcal{F}^2\left\Vert \phi\right\Vert_{L^2(M_{+};\mu)}^2; \label{e:B1estimate}\\
B_{1,1,2} & \lesssim & \delta^{-2}\left(\mathcal{F}^2+\mathcal{B}^2\right)\left\Vert \phi\right\Vert_{L^2(M_{+};\mu)}^2. \label{e:B2estimate}
\end{eqnarray}
Recall that $\delta$ is some fixed small number and so \eqref{e:B1estimate} and \eqref{e:B2estimate} imply the desired result.

We now consider the term $B_{1,1,1}$. For $B_{1,1,1}(k,I)$, by noting that $\sigma(I)\geq \sigma(F_{k,1,1}(I))\geq\delta \sigma(I) $ and that the operator $\mathsf{P}_{\sigma,1,1}$ is a positive operator and for a positive function $f$,
 we have $ \mathsf{P}_{\sigma,1,1}(f)(x,t)\ls \mathsf{P}_{\sigma}(f)(x,t)$.
Hence,
 \begin{align*}
&B_{1,1,1}(k,I)\\
&\leq \delta^{-1}\frac{1}{\sigma(I)}  \int_{3\hat{I}\backslash \hat{\Omega}_{k+\ell+1,1,1} } \mathsf{P}_{\sigma,1,1}( 1_{I} )(x,t) \phi(x,t) \,d\mu(x,t)\\
&\ls  \delta^{-1}\frac{1}{\sigma(I)} \left(\int_{3\hat{I}\backslash \hat{\Omega}_{k+\ell+1,1,1} } |\mathsf{P}_{\sigma}( 1_{I} )(x,t)|^2  \,d\mu(x,t)\right)^{\frac{1}{2}}\left(\int_{3\hat{I}\backslash \hat{\Omega}_{k+\ell+1,1,1} } |\phi(x,t)|^2  \,d\mu(x,t)\right)^{\frac{1}{2}}  \\
&\leq \delta^{-1}\mathcal{F} \frac{1}{\sigma(I)^{\frac{1}{2}}} \left(\int_{3\hat{I}\backslash \hat{\Omega}_{k+\ell+1,1,1} } |\phi(x,t)|^2  \,d\mu(x,t)\right)^{\frac{1}{2}},
\end{align*}
where the last inequality follows from the forward testing condition for $\mathsf{P}_{t}$. Hence,
\begin{align*}
B_{1,1,1}&\leq 2\delta^{-2}\mathcal{F}^2\sum_{k\in\mathbb{Z}}  \sum_{\substack{I\in\mathcal{I}_{k,1,1}\\ \sigma(F_{k,1,1}(I))\geq\delta \sigma(I) }}
 \frac{1}{\sigma(I)} \int_{3\hat{I}\backslash \hat{\Omega}_{k+\ell+1,1,1} } |\phi(x,t)|^2  \,d\mu(x,t)\sigma(F_{k,1,1}(I))\\
&\leq 2\delta^{-2}\mathcal{F}^2
  \int_{M_{+}} |\phi(x,t)|^2 \sum_{k\in\mathbb{Z}}  \sum_{\substack{I\in\mathcal{I}_{k,1,1}\\ \sigma(F_{k,1,1}(I))\geq\delta \sigma(I) }} 
  1_{3\hat{I}\backslash \hat{\Omega}_{k+\ell+1,1,1}}(x,t)  \,d\mu(x,t)\\
&\lesssim \delta^{-2}\mathcal{F}^2
  \int_{M_{+}} |\phi(x,t)|^2   \,d\mu(x,t),
\end{align*}
where the last inequality follows from the fact that
$$ \left\|\sum_{k\in\mathbb{Z}}  \sum_{\substack{I\in\mathcal{I}_{k,1,1}\\ \sigma(F_{k,1,1}(I))\geq\delta \sigma(I) }} 
  1_{3\hat{I}\backslash \hat{\Omega}_{k+\ell+1,1,1}}(x,t)\right\|_{\infty} \lesssim 1, $$
which is a consequence of the bounded overlaps of the Whitney cubes.  Thus, we have that  $B_{1,1,1}\lesssim \delta^{-2}\mathcal{F}^2 \left\Vert \phi\right\Vert_{L^2(M_{+}^2;\mu)}^2$ proving \eqref{e:B1estimate}.

We now estimate the term $B_{1,1,2}$, which is bounded by
\begin{align}\label{B2 e1}
 \sum_{k\in\mathbb{Z}}  \sum_{\substack{I\in\mathcal{I}_{k,1,1}\\ \sigma(F_{k,1,1}(I))\geq\delta \sigma(I) }}
\frac{2\delta^{-1}}{\sigma(I)}  \left(\int_{3\hat{I} \cap  \hat{\Omega}_{k+\ell+1,1,1}} \mathsf{P}_{\sigma,1,1}( 1_{F_{k,1,1}(I)} )(x,t) \phi(x,t) \,d\mu(x,t) \right)^2. 
\end{align}
To continue, we decompose 
\begin{align}\label{B2 decom}
 3\hat{I}\cap  \hat{\Omega}_{k+\ell+1,1,1}=\bigcup_{J} \{ \hat{J}: J\subset 3I, J\in \mathcal{I}_{k+\ell+1,1,1}\}.
\end{align}
Note that for such $J$, $3J\cap F_{k,1,1}(I)=\emptyset$. 
Moreover, we have that for $(x,t)\in\hat{J}$,
\begin{align}\label{claim P}
\mathsf{P}_{\sigma,1,1}(1_{F_{k,1,1}(I)})(x,t) \approx \frac{t}{\ell(J)}\mathsf{P}_{\sigma,1,1}(1_{F_{k,1,1}(I)})(x_J,\ell(J)), 
\end{align}
where the implicit constants are independent of $x$, $t$ and $I$.  From \eqref{claim P} we obtain that
\begin{align}
&\int_{\hat{J}}\mathsf{P}_{\sigma,1,1}(1_{F_{k,1,1}(I)})(x,t)\phi(x,t)\,d\mu(x,t)\nonumber \\
&\approx  
\mathsf{P}_{\sigma,1,1}(1_{F_{k,1,1}(I)})(x_J,\ell(J)) \int_{\hat{J}} \frac{t}{\ell(J)} \phi(x,t)\,d\mu(x,t)\nonumber \\
&\approx  
\mathsf{P}_{\sigma,1,1}(1_{F_{k,1,1}(I)})(x_J,\ell(J)) \int_{\hat{J}} \frac{1}{t\ell(J)} \phi(x,t)\,d\tilde{\mu}(x,t) \nonumber\\
&\approx  
\int_{\hat{J}}  \mathsf{P}_{\sigma,1,1}(1_{F_k(I)})(x,t)  \,d\tilde{\mu}(x,t) \cdot \frac{1}{\tilde{\mu}(\hat{J})} \cdot \frac{1}{\ell(J)}\int_{\hat{J}} \frac{1}{t} \phi(x,t)\,d\tilde{\mu}(x,t) \nonumber\\
&\lesssim \int_{\hat{J}}  \mathsf{P}_{\sigma,1,1}(1_{I})(x,t) \frac{1}{t} d\tilde{\mu}(x,t) \cdot \frac{1}{\tilde{\mu}(\hat{J})} \cdot\int_{\hat{J}} \frac{1}{t} \phi(x,t)\,d\tilde{\mu}(x,t), \label{B2 e2}
\end{align}
where the last inequality follows from the fact that $\mathsf{P}_{\sigma,1,1}$ is a positive operator, and $d\tilde{\mu}(x,t) =
t^2\,d\mu(x,t)$.

From \eqref{B2 e1}, the decomposition \eqref{B2 decom} and the inequality \eqref{B2 e2}, we get that
\begin{align*}
B_{1,1,2}&\leq C\delta^{-1}\sum_{k\in\mathbb{Z}}  \sum_{\substack{I\in\mathcal{I}_{k,1,1}\\ \sigma(F_{k,1,1}(I))\geq\delta \sigma(I) }}\frac{1}{\sigma(I)}\\
&\qquad  \left(\sum_{\substack{ J\in \mathcal{I}_{k+\ell+1,1,1}\\ J\subset 3I }} \int_{\hat{J}}  \mathsf{P}_{\sigma,1,1}(1_{I})(x,t) \,\frac{d\tilde{\mu}(x,t)}{t}  \cdot \frac{\int_{\hat{J}} \frac{1}{t} \phi(x,t)d\tilde{\mu}(x,t)}{\tilde{\mu}(\hat{J})}  \right)^2. 
\end{align*}
We now define $$\alpha(J)= \frac{1}{\tilde{\mu}(\hat{J})} \int_{\hat{J}} \frac{1}{t} \phi(x,t)d\tilde{\mu}(x,t)$$ for every cube $J \subset \R^m\backslash K$. Since $\phi_1\in L^2(M_{+};\mu)$, we have
\begin{align*}
\alpha(J)&\leq \left( \frac{1}{\tilde{\mu}(\hat{J})} \int_{\hat{J}} \Big|\frac{1}{t} \phi(x,t)\Big|^2d\tilde{\mu}(x,t)\right)^{\frac{1}{2}}
\leq \left( \frac{1}{\tilde{\mu}(\hat{J})} \int_{M_{+}} \Big| \phi(x,t)\Big|^2\,d\mu(x,t)\right)^{\frac{1}{2}}\\
&=\frac{1}{\tilde{\mu}(\hat{J})^{\frac{1}{2}}} \|\phi\|_{L^2(M_{+};\mu)}. 
\end{align*}
Hence $\alpha(J)$ is well-defined for each $J$.

We now define the set $\mathcal{G}$ of principal cubes as follows. Initialize $\mathcal{G}$
to be $I_0$, which is the large dyadic cube that $\sigma$ is supported on.  Next, consider the children $J$ of 
$I_0$.  If $\alpha(J) \geq 10 \alpha(I_0)$, then add $J$ to $\mathcal{G}$. If $\alpha(J) < 10 \alpha(I_0)$, then we continue to look at the children of this $J$. Then the set $\mathcal{G}$ is defined via induction.

Next we consider the maximal function 
$$M_{\tilde{\mu}}\psi(x,t) = \sup_{J\in \mathcal{D}, (x,t)\in \hat{J}} \frac{1}{\tilde{\mu}(\hat{J})} \int_{\hat{J}}  |\psi(y,s)|d\tilde{\mu}(y,s) $$
and prove that $M_{\tilde{\mu}}$ is bounded on $L^2((\R^m\backslash K)\times(0,\infty);\tilde{\mu})$.

\begin{prop} \label{max m}
${M}_{\tilde{\mu}}:\ L^2((\R^m\backslash K)\times(0,\infty)) \to L^2((\R^m\backslash K)\times(0,\infty);\tilde{\mu}).$
\end{prop}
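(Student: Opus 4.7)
The plan is to prove the standard dyadic maximal theorem on the measure space $\bigl((\R^m\setminus K)\times(0,\infty),\tilde\mu\bigr)$ by a Calder\'on--Zygmund stopping-time argument yielding weak type $(1,1)$, paired with the trivial $L^\infty\to L^\infty$ bound and Marcinkiewicz interpolation. (The natural reading of the statement, and the one actually needed in the sequel when $\alpha(J)$ is dominated by $M_{\tilde\mu}$ applied to $\phi_1/t$, is that both spaces are taken with respect to $\tilde\mu$.) The essential structural fact is that the family of Carleson-type boxes $\{\hat J:J\in\mathcal D\}$, with $\mathcal D$ Christ's dyadic system on $\R^m\setminus K$, forms a rooted tree under inclusion: $J_1\subsetneq J_2$ in $\mathcal D$ forces $\hat J_1\subset\hat J_2$, while $J_1\cap J_2=\emptyset$ forces $\hat J_1\cap\hat J_2=\emptyset$, since both the base cube and the height interval $[0,\ell(J)]$ respect the order.

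For the weak-type $(1,1)$ step, I would fix $\psi\in L^1(\tilde\mu)$ and $\lambda>0$, and let $\mathcal F_\lambda$ be the collection of dyadic boxes $\hat J$ with $\tilde\mu(\hat J)^{-1}\int_{\hat J}|\psi|\,d\tilde\mu>\lambda$. Every such $\hat J$ satisfies $\tilde\mu(\hat J)<\lambda^{-1}\|\psi\|_{L^1(\tilde\mu)}$, and along any ascending chain $\hat J_1\subset\hat J_2\subset\cdots$ one has $\tilde\mu(\hat J_k)\to\tilde\mu(\bigcup_k\hat J_k)$, so either the chain terminates or the averages eventually fall below $\lambda$. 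Consequently, each point of $\{M_{\tilde\mu}\psi>\lambda\}$ lies in some maximal $\hat J\in\mathcal F_\lambda$, and by the tree property the maximal elements $\{\hat J_i\}$ are pairwise disjoint. Therefore
\begin{equation*}
\tilde\mu\bigl(\{M_{\tilde\mu}\psi>\lambda\}\bigr)\le\sum_i\tilde\mu(\hat J_i)<\frac{1}{\lambda}\sum_i\int_{\hat J_i}|\psi|\,d\tilde\mu\le\frac{1}{\lambda}\|\psi\|_{L^1(\tilde\mu)}.
\end{equation*}

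Combining this with the trivial bound $\|M_{\tilde\mu}\psi\|_\infty\le\|\psi\|_\infty$, Marcinkiewicz interpolation yields $L^p(\tilde\mu)$-boundedness for every $1<p\le\infty$, in particular the desired $L^2$ bound. The main (and only mild) obstacle is ensuring the existence of maximal elements in $\mathcal F_\lambda$: in the present setup this comes for free, since $\mu$ was already reduced to a compactly supported measure on $3\hat I_0$ at the beginning of Section~\ref{s:MainResult}, whence $\tilde\mu$ is a finite measure and only dyadic cubes $\hat J$ with $J\subset I_0$ contribute, so the stopping argument takes place on a finite rooted tree. Beyond this point, the argument is entirely standard dyadic Calder\'on--Zygmund theory; in particular, no Besicovitch-type covering lemma or doubling hypothesis is required, which is why this route is well suited to the non-doubling manifold $M$.
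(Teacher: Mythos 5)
Your proof is correct and follows essentially the same route as the paper's: the trivial $L^\infty$ bound, a weak type $(1,1)$ estimate obtained by covering the level set with pairwise disjoint maximal dyadic Carleson boxes $\hat J$, and Marcinkiewicz interpolation. The only (minor) divergence is in justifying the existence of maximal boxes, where your appeal to the compact-support reduction of $\mu$ (so that $\tilde\mu$ is finite and the relevant tree is effectively finite) is in fact cleaner than the paper's contradiction argument, which asserts $\tilde\mu(\hat J_k)\to\infty$ along an ascending chain without justification.
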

\begin{proof}

It is easy to see that the maximal function $M_{\tilde{\mu}}\psi(x,t)$ is bounded on 
$L^\infty((\R^m\backslash K)\times(0,\infty))$. Thus, it suffices to show that it is also weak type (1,1).

To see this, let $0\le \psi\in L^1((\R^m\backslash K)\times(0,\infty))$ and $\lambda>0$.
Consider the level set 
$$S_\lambda:=\{(x,t)\in (\R^m\backslash K)\times(0,\infty) : M_{\tilde{\mu}}\psi(x,t)>\lambda\},$$
which is the union of the maximal dyadic cubes $\hat J=J\times [0, \ell(J)]$ in $(\R^m\backslash K)\times(0,\infty)$ with some $J\in \mathcal{D}$ such that
$$\int_{\hat J} |\psi(y,s)|d\tilde{\mu}(y,s)>\lambda \tilde{\mu}(\hat J)>0.$$
Here, the argument $\hat J$ is maximal means that if there is a $J_1\in \mathcal{D}$
with $J\subsetneq J_1$, then 
$$\int_{\hat {J_1}} |\psi(y,s)|d\tilde{\mu}(y,s)\le \lambda \tilde{\mu}(\hat {J_1}).$$
We point out that such maximal dyadic cubes always exist.
In fact, suppose there is $(x,t)\in S_\lambda$ such that there is no maximal dyadic cubes in those dyadic cubes that contain $(x,t)$. There we have a sequence of increasing nested dyadic cubes $\hat J_k$ containing $(x,t)$ such that $\tilde{\mu}(\hat J_k)\to \infty$ as $k\to \infty$ with 
$$\int_{\hat J_k} |\psi(y,s)|d\tilde{\mu}(y,s)>\lambda \tilde{\mu}(\hat J_k).$$
However, this leads to contradiction since
$$\int_{\hat J_k} |\psi(y,s)|d\tilde{\mu}(y,s)\le \|\psi\|_{L^1((\R^m\backslash K)\times(0,\infty))}, \qquad \mbox{for all } k\in \mathbb Z.$$
  Thus, we have a sequence of disjoint dyadic maximal cubes $\{\hat J_k\}_{k\in \mathbb Z}$
 such that $$S_\lambda\subset \bigcup_k \hat J_k.$$
 We then have
 $$\sum_{k\in \mathbb Z} \tilde{\mu}(\hat J_k) \le \frac1\lambda \sum_{k\in \mathbb Z}
  \int_{\hat J_k} |\psi(y,s)|d\tilde{\mu}(y,s) \le \frac1\lambda 
  \int_{(\R^m\backslash K)\times(0,\infty)} |\psi(y,s)|d\tilde{\mu}(y,s)<\infty.$$
  As a consequence, we obtain that
  $$\tilde{\mu}(S_\lambda)\le \sum_{k\in \mathbb Z} \tilde{\mu}(\hat J_k)\le \frac1\lambda
  \|\psi\|_{L^1((\R^m\backslash K)\times(0,\infty))},$$
 which implies that ${M}_{\tilde{\mu}}$ is weak type (1,1), and hence the proof is complete.
\end{proof}

From the  $L^2((\R^m\backslash K)\times(0,\infty);\tilde{\mu})$-boundedness of $M_{\tilde{\mu}}$, we have
\begin{align}\label{maximal function}
\sum_{I\in\mathcal{G}} \alpha(I)^2 \tilde{\mu}(\hat{I}) &\leq  \sum_{I\in\mathcal{G}} \left( \inf_{(x,t)\in \hat{I}} M_{\tilde{\mu}}(\tilde{\phi})(x,t)\right)^2 \tilde{\mu}(\hat{I}) \\
&\leq \int_{3\hat{I}_0} M_{\tilde{\mu}}(\tilde{\phi})(x,t)^2 \,d\tilde{\mu}(x,t)\nonumber\\
&\lesssim \int_{3\hat{I}_0} \tilde{\phi}(x,t)^2 \,d\tilde{\mu}(x,t)\nonumber\\
&\leq  \|\phi\|_{L^2(M_{+},\,d\mu)}^2, \nonumber
\end{align}
where $\tilde{\phi}(x,t) = t^{-1} \phi(x,t)$.
\bigskip

Next, in the sum over $\mathcal{I}_{k+\ell+1,1,1}$, we denote $I_1=I$ and $I_i, 2\le i\le m^3$ with $I_i\cap I\not= \emptyset$ and $\overset \circ {I_i}\cap \overset \circ I=\emptyset$. The union of these intervals is $3I$. This notation, together with the definition of $\mathcal{G}$,  gives
\begin{align*}
B_{1,1,2}&\lesssim \delta^{-1}\sum_{k\in\mathbb{Z}}  \sum_{\substack{I\in\mathcal{I}_{k,1,1}\\ \sigma(F_k(I))\geq\delta \sigma(I) }}
\frac{1}{\sigma(I)} \\
&\quad\quad\times \left(\sum_{\theta=1}^{m^3} \sum_{\substack{ J\in \mathcal{I}_{k+\ell+1,1,1}\\ J\subset I_\theta, \pi_\mathcal{G}J=\pi_\mathcal{G} I_\theta}} \int_{\hat{J}}  \mathsf{P}_{\sigma,1,1}(1_{I})(x,t) \,\frac{d\tilde{\mu}(x,t)}{t}  \cdot \frac{\int_{\hat{J}} \frac{1}{t} \phi(x,t)d\tilde{\mu}(x,t)}{\tilde{\mu}(\hat{J})}  \right)^2\\
&\quad\quad \quad+\delta^{-1}\sum_{k\in\mathbb{Z}}  \sum_{\substack{I\in\mathcal{I}_{k,1,1}\\ \sigma(F_k(I))\geq\delta \sigma(I) }}
\frac{1}{\sigma(I)} \\
&\quad\quad\quad\quad\times \left(\sum_{\theta=1}^{m^3} \sum_{\substack{ J\in \mathcal{I}_{k+\ell+1,1,1}\\ J\subset I_\theta, \pi_\mathcal{G}J\subsetneq\pi_\mathcal{G} I_\theta}} \int_{\hat{J}}  \mathsf{P}_{\sigma,1,1}(1_{I})(x,t) \frac{1}{t} \,d\tilde{\mu}(x,t) \cdot \frac{\int_{\hat{J}} \frac{1}{t} \phi(x,t)\,d\tilde{\mu}(x,t) }{\tilde{\mu}(\hat{J})} \right)^2 \\
&=: B_{1,1,21}+B_{1,1,22}.
\end{align*}
\color{black}

Thus, to prove \eqref{e:B2estimate} it will suffice to provide an estimate of the right form on each of $B_{1,1,21}$ and $B_{1,1,22}$.  We will show that:
\begin{eqnarray}
B_{1,1,21} & \lesssim & \delta^{-2} \mathcal{B}^2  \|\phi\|_{L^2(M_{+};\mu)}^2;\label{e:B21estimate}\\
B_{1,1,22} & \lesssim & \delta^{-1}\mathcal{F}^2\|\phi\|_{L^2(M_{+};\mu)}^2.\label{e:B22estimate}
\end{eqnarray}

For $B_{1,1,21}$, using the definition of $\alpha(J)$, we have
\begin{align*}
&B_{1,1,21}\\
&\lesssim \delta^{-1}\sum_{\theta=1}^{m^3}\sum_{k\in\mathbb{Z}}  \sum_{\substack{I\in\mathcal{I}_{k,1,1}\\ \sigma(F_{k,1,1}(I))\geq\delta \sigma(I) }}
\frac{1}{\sigma(I)}  \alpha(J)^2\left( \sum_{\substack{ J\in \mathcal{I}_{k+\ell+1,1,1}\\ J\subset I_\theta, \pi_\mathcal{G}J=\pi_\mathcal{G} I_\theta}} \int_{\hat{J}}  \mathsf{P}_{\sigma,1,1}(1_{I})(x,t) \,\frac{d\tilde{\mu}(x,t)}{t}   \right)^2\\
&\lesssim \delta^{-1}\sum_{\theta=1}^{m^3}\sum_{k\in\mathbb{Z}}  \sum_{\substack{I\in\mathcal{I}_{k,1,1}\\ \sigma(F_{k,1,1}(I))\geq\delta \sigma(I) }}
\frac{1}{\sigma(I)}  \alpha(\pi_{\mathcal{G}}I_\theta)^2\left( \sum_{\substack{ J\in \mathcal{I}_{k+\ell+1,1,1}\\ J\subset I_\theta, \pi_\mathcal{G}J=\pi_\mathcal{G} I_\theta}} \int_{\hat{J}}  \mathsf{P}_{\sigma,1,1}(1_{I})(x,t) t \,d\mu(x,t)  \right)^2\\
&\lesssim \delta^{-1}\sum_{\theta=1}^{m^3}\sum_{k\in\mathbb{Z}}  \sum_{\substack{I\in\mathcal{I}_{k,1,1}\\ \sigma(F_{k,1,1}(I))\geq\delta \sigma(I) }}
\frac{1}{\sigma(I)}  \alpha(\pi_{\mathcal{G}}I_\theta)^2\left( \int_{I}  \mathsf{P}^{*}_{\mu,1,1}(t 1_{\hat{I_\theta}})(y)  \,d\sigma(y)  \right)^2\\
&\lesssim \delta^{-1}\sum_{\theta=1}^{m^3}\sum_{k\in\mathbb{Z}}  \sum_{\substack{I\in\mathcal{I}_{k,1,1}\\ \sigma(F_{k,1,1}(I))\geq\delta \sigma(I) }}
\frac{1}{\sigma(I)}  \alpha(\pi_{\mathcal{G}}I_\theta)^2\, \sigma(I)   \int_{3I}  \mathsf{P}^{*}_{\mu}(t 1_{\hat{I_\theta}})(y)^2  \,d\sigma(y)    \\
&\lesssim\delta^{-1}\mathcal{B}^2\sum_{\theta=1}^{m^3}\sum_{k\in\mathbb{Z}}  \sum_{\substack{I\in\mathcal{I}_{k,1,1}\\ \sigma(F_{k,1,1}(I))\geq\delta \sigma(I) }}
  \alpha(\pi_{\mathcal{G}}I_\theta)^2\,    \tilde{\mu}(\hat{I}_\theta)   \\
&= \delta^{-1}\mathcal{B}^2\sum_{\theta=1}^{m^3}  \sum_{G\in\mathcal{G}}  \alpha(G)^2  \sum_{k\in\mathbb{Z}}  \sum_{\substack{I\in\mathcal{I}_{k,1,1}\\ \sigma(F_{k,1,1}(I))\geq\delta \sigma(I)\\  \pi_{\mathcal{G}} I_\theta =G}}
  \tilde{\mu}(\hat{I}_\theta),
\end{align*}
where the last inequality follows from the testing condition for $\mathsf{P}^{*}$.

We point out that for each dyadic cube $I$, the set
$$ \left\{k\in\mathbb{Z}:\ I\in\mathcal{I}_{k,1,1}, \sigma(F_{k,1,1}(I)) \geq \delta \sigma(I)\right\} $$
consists of at most $\delta^{-1}$ consecutive integers. Actually, that the integers
in this set are consecutive follows from the nested property of the collections $\mathcal{I}_{k,1,1}$.
Moreover, note that for each fixed $I$, the sets $F_{k,1,1}(I)\subset I$ are pairwise disjoint (with respect to $k$), and 
for each $k$,  $\sigma(F_{k,1,1}(I)) \geq \delta \sigma(I)$. Hence, there are at most $\delta^{-1}$ such integers $k$.

As a consequence, we obtain that
\begin{align*}
B_{1,1,21}&\leq C\delta^{-2} \mathcal{B}^2\sum_{G\in\mathcal{G}}  \alpha(G)^2  
  \tilde{\mu}(\hat{G})\leq C\delta^{-2} \mathcal{B}^2  \|\phi\|_{L^2(M_{+};\mu)}^2,
\end{align*}
where the last inequality follows from the maximal inequality \eqref{maximal function}.  This gives \eqref{e:B21estimate}.

We now turn to the estimate $B_{1,1,22}$. Using the definition of $\alpha(J)$, we have
\begin{align*}
&B_{1,1,22}\\
&\lesssim\delta^{-1}\sum_{\theta=1}^{m^3} \sum_{k\in\mathbb{Z}}  \sum_{\substack{I\in\mathcal{I}_{k,1,1}\\ \sigma(F_{k,1,1}(I))\geq\delta \sigma(I) }}
\frac{1}{\sigma(I)}  \left(\sum_{\substack{ J\in \mathcal{I}_{k+\ell+1,1,1}\\ J\subset I_\theta\\ \pi_\mathcal{G}J\subsetneq\pi_\mathcal{G} I_\theta}} \int_{\hat{J}}  \mathsf{P}_{\sigma,1,1}(1_{I})(x,t) \,\frac{d\tilde{\mu}(x,t)}{t}  \cdot \alpha(J) \right)^2 \\
&\lesssim\delta^{-1}\sum_{\theta=1}^{m^3} \sum_{k\in\mathbb{Z}}  \sum_{\substack{I\in\mathcal{I}_{k,1,1}\\ \sigma(F_{k,1,1}(I))\geq\delta \sigma(I) }}
\frac{1}{\sigma(I)}  \sum_{\substack{ J\in \mathcal{I}_{k+\ell+1,1,1}\\ J\subset I_\theta\\ \pi_\mathcal{G}J\subsetneq\pi_\mathcal{G} I_\theta}} \left[\int_{\hat{J}}  \mathsf{P}_{\sigma,1,1}(1_{I})(x,t)\, \frac{d\tilde{\mu}(x,t)}{t} \right]^{2}\tilde{\mu}(\hat{J})^{-1}  \\
&\quad\quad\times  \sum_{\substack{ J\in \mathcal{I}_{k+\ell+1,1,1}\\ J\subset I_\theta\\ \pi_\mathcal{G}J\subsetneq\pi_\mathcal{G} I_\theta}} \tilde{\mu}(\hat{J}) \alpha(J)^2,
\end{align*}
where the last inequality follows from Cauchy--Schwarz inequality. Next, from the Cauchy--Schwarz inequality, the definition of $\tilde{\mu}$ and the testing condition, we have
\begin{align*}
  \sum_{\substack{ J\in \mathcal{I}_{k+\ell+1,1,1}\\ J\subset I_\theta\\ \pi_\mathcal{G}J\subsetneq\pi_\mathcal{G} I_\theta}} \left[\int_{\hat{J}}  \mathsf{P}_{\sigma,1,1}(1_{I})(x,t) \,\frac{d\tilde{\mu}(x,t)}{t} \right]^{2}\tilde{\mu}(\hat{J})^{-1}   & \leq \sum_{\substack{ J\in \mathcal{I}_{k+\ell+1,1,1}\\ J\subset I_\theta\\ \pi_\mathcal{G}J\subsetneq\pi_\mathcal{G} I_\theta}} \int_{\hat{J}}  \mathsf{P}_\sigma(1_{I})(x,t)^2  \,d\mu(x,t)\\
& \leq \mathcal{F}^2 \sigma(I),
\end{align*}
which implies that
\begin{align}
B_{1,1,22}
&\lesssim\mathcal{F}^2\delta^{-1}\sum_{\theta=1}^{m^3} \sum_{k\in\mathbb{Z}}  \sum_{\substack{I\in\mathcal{I}_{k,1,1}\\ \sigma(F_{k,1,1}(I))\geq\delta \sigma(I) }}
\frac{1}{\sigma(I)}   \sigma(I)  \sum_{\substack{ J\in \mathcal{I}_{k+\ell+1,1,1}\\ J\subset I_\theta\\ \pi_\mathcal{G}J\subsetneq\pi_\mathcal{G} I_\theta}} \tilde{\mu}(\hat{J}) \alpha(J)^2\nonumber\\
&\lesssim\mathcal{F}^2\delta^{-1} \sum_{k\in\mathbb{Z}}  \sum_{\substack{I\in\mathcal{I}_{k,1,1}\\ \sigma(F_{k,1,1}(I))\geq\delta \sigma(I) }}
\sum_{\substack{ J\in \mathcal{I}_{k+\ell+1,1,1}\\ J\subset I_\theta, \pi_\mathcal{G}J\subsetneq\pi_\mathcal{G} I_\theta}} \tilde{\mu}(\hat{J}) \alpha(\pi_\mathcal{G}J)^2.\label{B22}
\end{align}

We now recall a technical result from Lacey \cite{L}*{Lemma 8.15}.
\begin{lem}\label{L}
There is an absolute constant $C$ such that for any $G\in\mathcal{G}$, the cardinality of the set
$$ \left\{k:\ \pi_{\mathcal{G}}J = G,\ J\in\mathcal{I}_{k+\ell+1,1,1}\ {\rm contributes\ to\ the\ } k{\rm th\ sum\ in\ } \eqref{B22} \right\} $$
is at most $C$.
\end{lem}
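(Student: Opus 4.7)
The plan is to reduce to a counting argument by showing that, for each contributing $k$, the value $2^{k}$ lies in a controlled range determined by $\alpha(G)$. First, I will carry out a geometric reduction. Suppose $k$ contributes via a pair $(I,J)$ with $I\in\mathcal{I}_{k,1,1}$, $\sigma(F_{k,1,1}(I))\geq\delta\sigma(I)$, $J\in\mathcal{I}_{k+\ell+1,1,1}$, $J\subset I_\theta\subset 3I$, and $\pi_\mathcal{G}J=G\subsetneq \pi_\mathcal{G}I_\theta$. Since $J\subset G\cap I_\theta$, the cubes $G$ and $I_\theta$ meet; the strict inclusion $G\subsetneq \pi_\mathcal{G}I_\theta$ rules out $I_\theta\subset G$, for that would force $\pi_\mathcal{G}I_\theta\subseteq G$. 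Hence $G\subseteq I_\theta$, so $J\subseteq G\subseteq I_\theta\subset 3I$ with $\ell(I)=\ell(I_\theta)\geq\ell(G)\geq\ell(J)$.

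Next I will extract a lower bound on $2^{k}$ in terms of $\alpha(G)$. The Whitney property of $J$ supplies a witness $z_J\in 5J\setminus \Omega_{k+\ell+1,1,1}$ with $\mathsf{P}^{*}_{\mu,1,1}\phi_1(z_J)\leq 2^{k+\ell+1}$, so the local portion satisfies $\mathsf{P}^{*}_{\mu,1,1}(\phi_1\cdot 1_{\hat J})(z_J)\leq 2^{k+\ell+1}$. The pointwise kernel estimate $\mathsf{P}_{t,1,1}(y,z_J)\gtrsim t/\ell(J)^{m+1}$ for $(y,t)\in\hat J$, combined with the identity $\int_{\hat J}t\,\phi_1\,d\mu=\tilde\mu(\hat J)\alpha(J)$, yields
\[
2^{k+\ell+1}\;\gtrsim\;\frac{\tilde\mu(\hat J)\,\alpha(J)}{\ell(J)^{m+1}}.
\]
A parallel argument at the scale of $I$, exploiting $\sigma(F_{k,1,1}(I))\geq \delta\sigma(I)$ to locate points in $I$ where $\mathsf{P}^{*}_{\mu,1,1}\phi_1$ is of order $2^{k+\ell}$, yields a matching upper bound $2^{k}\lesssim \delta^{-1}\tilde\mu(\hat I)\alpha(I)/\ell(I)^{m+1}$. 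Using the nested containments $J\subseteq G\subseteq I_\theta\subset 3I$ with $\pi_\mathcal{G}J=G$, the stopping-time inequality $\alpha(J)\leq 10\alpha(G)$, and the bounded overlap of the Whitney family, one shows that both ratios sit in an $O(1)$ window around $\alpha(G)$, forcing $k$ into an absolutely bounded interval.

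The main obstacle is the upper-bound half of this two-sided estimate. The lower bound follows cleanly from the pointwise Poisson lower bound on $\hat J$ and the $\mathcal{G}$-stopping condition, whereas the upper bound requires converting the abstract density condition $\sigma(F_{k,1,1}(I))\geq \delta\sigma(I)$ into a quantitative averaged Poisson inequality in the opposite direction. For the $\mathbb{R}^m\setminus K$ end the kernel $\mathsf{P}_{t,1,1}$ obeys the same pointwise estimates as the classical Euclidean Poisson kernel, so the counting argument of Lacey \cite{L}*{Lemma 8.15} applies verbatim and produces the absolute constant $C$, uniformly in $G\in\mathcal{G}$ and in the neighbor index $\theta\in\{1,\ldots,m^{3}\}$.
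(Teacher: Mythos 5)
The first thing to note is that the paper does not actually prove this lemma: it simply recalls it as Lemma 8.15 of Lacey \cite{L}. The only part of your proposal that coincides with the paper is therefore your final sentence, where you too fall back on that citation. The argument you sketch before that is not Lacey's argument, and it contains a gap that you flag yourself (``the main obstacle'') but never close, so the proposal does not constitute a proof.

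Concretely, the problem is the ``matching upper bound'' $2^{k}\lesssim\delta^{-1}\tilde\mu(\hat I)\alpha(I)/\ell(I)^{m+1}$. The inequality $\mathsf{P}^{*}_{\mu,1,1}(\phi_1 1_{3\hat I})(x)\ge 2^k$ on $F_{k,1,1}(I)$ points the wrong way for this purpose: to convert it into an upper bound on $2^{k}$ of the stated form you would need the reverse kernel estimate $\mathsf{P}_{t,1,1}(y,x)\lesssim t/\ell(I)^{m+1}$ for $(y,t)\in 3\hat I$ and $x\in F_{k,1,1}(I)$, which is false --- the kernel is of size $t^{-m}$ when $d(x,y)\lesssim t$, and the density hypothesis $\sigma(F_{k,1,1}(I))\ge\delta\sigma(I)$ imposes no separation between the supports of $\mu$ and $\sigma$. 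Moreover, even granting both halves, they sandwich $2^{k}$ between $\tilde\mu(\hat J)\alpha(J)/\ell(J)^{m+1}$ and $\tilde\mu(\hat I)\alpha(I)/\ell(I)^{m+1}$, quantities depending on $J=J_{k}$ and $I=I_{k}$ and hence on $k$; the assertion that ``both ratios sit in an $O(1)$ window around $\alpha(G)$'' is unsupported. The stopping rule yields only the one-sided bound $\alpha(J)<10\,\alpha(G)$, says nothing about the geometric factor $\tilde\mu(\hat J)/\ell(J)^{m+1}$, and gives no control of $\alpha(I)$ in terms of $\alpha(G)$, since $G\subsetneq I_{\theta}$ and $\pi_{\mathcal G}I_{\theta}\ne G$ by hypothesis. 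Invoking Lacey's counting lemma ``verbatim'' at the end does not repair this: that lemma is not proved by pinning $2^{k}$ in such a two-sided window, so the citation replaces your argument rather than completing it. If you intend to match the paper, the honest route is to quote the lemma from \cite{L} after verifying that the combinatorial ingredients (Whitney cubes for the level sets $\Omega_{k,1,1}$, the principal cubes $\mathcal G$, and the pairwise disjointness in $k$ of the sets $F_{k,1,1}(I)$) transfer to the Christ dyadic cubes on $\R^m\setminus K$; if you intend to prove it, you need a mechanism other than a Harnack-type comparison of $\mathsf{P}^{*}_{\mu,1,1}\phi_1$ across nearby points and scales, which is unavailable for general measures $\mu$.
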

As a consequence of Lemma \ref{L}, we get that 
\begin{align*}
B_{1,1,22}
&\lesssim \mathcal{F}^2\delta^{-1} \sum_{I\in \mathcal{G}} \tilde{\mu}(\hat{I}) \alpha(I)^2 \lesssim\mathcal{F}^2\delta^{-1}  \|\phi\|_{L^2(M_{+};\mu)}^2
\end{align*}
which is \eqref{e:B22estimate}.

\subsubsection{Term $T_{1,2}$}
Set
\begin{align*}
\Omega_{k,1,2}&:=\left\{ x\in \R^m\backslash K: \mathsf{P}^{*}_{\mu,1,2}(\phi_1)(x)  >2^k \right\}.
\end{align*}

Let $\ell_1$ be a large constant to determined later. Then we apply the Whitney decomposition
to $\R^m\backslash K$  to get
$$\R^m\backslash K = \bigcup_{I} I ,$$
where these $I$'s are dyadic cubes from Lemma \ref{WD}. 
Then we have
\begin{align*}
\int_{\R^m\backslash K} | \mathsf{P}^{*}_{\mu,1,2}(\phi_1)(x)|^2 \,d\sigma(x) 
&= \sum_{k\in\mathbb{Z}}\int_{(\Omega_{k+\ell_1,1,2}\backslash\Omega_{k+\ell_1+1,1,2})\cap \R^m\backslash K} | \mathsf{P}^{*}_{\mu,1,2}(\phi_1)(x)|^2 \,d\sigma(x) \\
&\lesssim  \sum_{k\in\mathbb{Z}} 2^{2k}\sigma\big((\Omega_{k+\ell_1,1,2}\backslash\Omega_{k+\ell_1+1,1,2})\cap \R^m\backslash K\big)\\
&=\sum_{k\in\mathbb{Z}} 2^{2k} \sum_{I\in\mathcal{I}_{k,1,2
}}\sigma(I\cap (\Omega_{k+\ell_1,1,2}\backslash\Omega_{k+\ell_1+1,1,2})),
\end{align*}
where $\mathcal{I}_{k,1,2}$ is a Whitney decomposition of $\Omega_{k,1,2}$.
Set $F_{k,1,2}(I):= I \cap (\Omega_{k+\ell_1,1,2}\backslash\Omega_{k+\ell_1+1,1,2}), I\in \mathcal{I}_{k,1,2}$. Now let $\delta_1\in(0,1)$, to be chosen sufficiently small.  Then we have
\begin{align*}
&\int_{\R^m\backslash K} | \mathsf{P}^{*}_{\mu,1,2}(\phi)(x)|^2 \,d\sigma(x) \\
&=\sum_{k\in\mathbb{Z}} 2^{2k} \sum_{\substack{I\in\mathcal{I}_{k,1,2}\\ \sigma(F_{k,1,2}(I))<\delta_1 \sigma(I) }}\sigma(F_{k,1,2}(I))
+\sum_{k\in\mathbb{Z}} 2^{2k} \sum_{\substack{I\in\mathcal{I}_{k,1,2}\\ \sigma(F_{k,1,2}(I))\geq\delta_1 \sigma(I) }}\sigma(F_{k,1,2}(I))\\
&=:A_{1,2}+B_{1,2}.
\end{align*}
As for the term $A_{1,2}$, it is handled in the same fashion as $A_{1,1}$. Thus it remains to show that term $B_{1,2}$ can be dominated in terms of the testing conditions.

To continue, we first show that the operator $\mathsf{P}^{*}_{\mu,1,2}$ satisfies the following maximum principle.
\begin{lem}\label{lem P*12}
There exists a positive constant $C_1$ such that
\begin{align}
\mathsf{P}^{*}_{\mu,1,2}(\phi\cdot 1_{(3\hat{I})^c} )(x) < C_12^k
\end{align}
for all $x\in I$, $I\in\mathcal{I}_k$ and $k\in\mathbb{Z}$.
\end{lem}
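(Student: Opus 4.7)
The plan is to follow the template of Lemma \ref{lem P*}, reducing the claim to the pointwise domination $\mathsf{P}_{t,1,2}(y,x)\lesssim \mathsf{P}_{t,1,2}(y,z)$ for all $(y,t)\notin 3\hat{I}$, where $z\in 5I\cap \Omega_{k,1,2}^c$ is selected via the Whitney property (so that $\ell(I)<d(x,z)<3\ell(I)$). Once such a pointwise estimate is in hand, multiplying by $\phi_1(y,t)\,1_{(3\hat{I})^c}(y,t)$ and integrating against $d\mu$ yields
\begin{equation*}
\mathsf{P}^{*}_{\mu,1,2}(\phi_1\cdot 1_{(3\hat{I})^c})(x)\le C\,\mathsf{P}^{*}_{\mu,1,2}(\phi_1)(z)\le C\cdot 2^k,
\end{equation*}
which is the desired inequality.

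Writing out
\begin{equation*}
\mathsf{P}_{t,1,2}(y,x)=\frac{t}{|y|^{m-2}\,|x|^{m-2}\,(t+|y|+|x|)^{n+1}},
\end{equation*}
the pointwise comparison reduces to the two subestimates $|z|\lesssim |x|$ and $t+|y|+|z|\lesssim t+|y|+|x|$. The key new ingredient, absent from the proof of Lemma \ref{lem P*}, is a lower bound of the form $|x|\gtrsim \ell(I)$. I expect this to follow from the containment $3I\subset \Omega_{k,1,2}\subset \R^m\setminus K$: since the Whitney dilate $3I$ is then disjoint from $K$, the Christ-cube structure forces $d(x,K)\gtrsim \ell(I)$ for every $x\in I$, and combined with $|x|\approx 1+d(x,K)$ this yields $|x|\gtrsim \ell(I)$. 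Then the Whitney distance bound $d(x,z)\le 3\ell(I)\lesssim |x|$ gives $|z|\le |x|+3\ell(I)\lesssim |x|$, so that $|z|\approx |x|$.

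For the denominator comparison, the condition $(y,t)\notin 3\hat{I}$ splits into either $t>3\ell(I)$ or $y\notin 3I$ with $t\le 3\ell(I)$. In the first case the excess $3\ell(I)$ in $t+|y|+|z|\le t+|y|+|x|+3\ell(I)$ is absorbed by $t$; in the second case it is absorbed instead by $|x|$, exactly via the bound $|x|\gtrsim \ell(I)$ established in the previous paragraph. Either way $t+|y|+|z|\le C(t+|y|+|x|)$, which together with $|z|\approx |x|$ gives $\mathsf{P}_{t,1,2}(y,x)\le C_1\mathsf{P}_{t,1,2}(y,z)$ as needed. The main obstacle is thus precisely to extract $|x|\gtrsim \ell(I)$ from the structural fact that $3I\cap K=\emptyset$; once this is done, the remainder of the argument parallels Lemma \ref{lem P*} verbatim.
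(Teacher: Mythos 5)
Your proposal is correct and follows essentially the same route as the paper: the paper's proof also picks $z\in 5I\cap\Omega_k^c$, derives $d(x,K)\ge\ell(I)$ from $3I\subset\Omega_k\subset\R^m\setminus K$, concludes $|z|\approx 1+d(z,K)\lesssim 1+d(z,x)+d(x,K)\lesssim|x|$, and then obtains the pointwise kernel domination $\mathsf{P}_{t,1,2}(y,x)\le C_1\mathsf{P}_{t,1,2}(y,z)$ before integrating. The only cosmetic difference is that your case split on $t>3\ell(I)$ versus $y\notin 3I$ is not needed for this kernel, since $|z|\lesssim|x|$ already yields $t+|y|+|z|\lesssim t+|y|+|x|$ for all $(y,t)$.
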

\begin{proof}
Note that $I$ is the Whitney cubes, satisfying $3I\subset \Omega_k$ and 
$9C_1\delta^{-1}I\not\subset\Omega_k$. We now choose $z\in (9C_1\delta^{-1}I\cap \Omega_k^c)$. Then we obtain that
$\ell(I)<d(z,x)<5C_1\delta^{-1}\ell(I)$. Recall that $\mathsf{P}^{*}_{\mu,1,2}$ is the operator associated to the integral kernel
$ \mathsf{P}_{t,1,2}(x,y)$.
Since $x\in I$ and $3I\subset \Omega_k\subset \R^m\backslash K$, we have $d(x,K)\ge \ell(I)$.
For $z\in 9C_1\delta^{-1}I\cap \Omega_k^c$, it is clear that
$$|z|\approx 1+d(z,k)\lesssim 1+d(z,x)+d(x,K)\lesssim 1+d(x,K) \approx |x|.$$
Hence for $z\in 9C_1\delta^{-1}I\cap \Omega_k^c$ and for every $y$ with $(y,t)\not\in 3\hat{I}$,
there holds
\begin{equation}\label{claim P12}
\begin{aligned}
\mathsf{P}_{t,1,2}(y,x)&=\frac{1}{t^n|x|^{m-2}}\Big(\frac{t}{t+d(x,y)}\Big)^{n+ 1}\\
&\leq C_1\frac{1}{t^n|z|^{m-2}}\Big(\frac{t}{t+d(z,y)}\Big)^{n+ 1}= C_1\mathsf{P}_{t,1,2}(y,z).
\end{aligned}
\end{equation}
Now we multiply it by $\phi(y,t) 1_{(3\hat{I})^c}$ and then integrate with respect
to $d\mu(y,t)$. As a consequence, we have
\begin{align*}
\mathsf{P}^{*}_{\mu,1,2}(\phi\cdot 1_{(3\hat{I})^c} )(x)\leq C_1 \mathsf{P}^{*}_{\mu,1,2}(\phi\cdot 1_{(3\hat{I})^c} )(z) \leq C_12^k.
\end{align*}
The proof  is complete.
\end{proof}

Now for $I\in\mathcal{I}_{k,1,2}$ with $\sigma(F_{k,1,2}(I))\geq\delta_1 \sigma(I) $ and for each $x\in F_{k,1,2}(I)$, it follows from
the above lemma that
\begin{align*}
\mathsf{P}^{*}_{\mu,1,2}(\phi\cdot 1_{3\hat{I}} )(x) &=
\mathsf{P}^{*}_{\mu,1,2}(\phi )(x) -  \mathsf{P}^{*}_{\mu,1,2}(\phi\cdot 1_{(3\hat{I})^c} )(x) 
\geq 2^{k+\ell_1} - C_12^k,
\end{align*}
where $C_1$ is the constant from Lemma \ref{lem P*12}.
By choosing $\ell_1$ such that $2^{\ell_1}>C_1+1$, we obtain that 
\begin{align*}
\mathsf{P}^{*}_{\mu,1,2}(\phi\cdot 1_{3\hat{I}} )(x) \geq 2^k. 
\end{align*}
Hence,
\begin{align*}
2^k &\leq \frac{1}{\sigma(F_{k,1,2}(I))} \int _{F_{k,1,2}(I)} \mathsf{P}^{*}_{\mu,1,2}(\phi\cdot 1_{3\hat{I}} )(x)\,d\sigma(x) \\
&=\frac{1}{\sigma(F_{k,1,2}(I))}  \int_{3\hat{I}\backslash \hat{\Omega}_{k+\ell_1+1,1,2} } \mathsf{P}_{\sigma,1,2}( 1_{F_{k,1,2}(I)} )(x,t) \phi(x,t) \,d\mu(x,t)\\
&\quad+\frac{1}{\sigma(F_{k,1,2}(I))}  \int_{3\hat{I} \cap  \hat{\Omega}_{k+\ell_1+1,1,2}} \mathsf{P}_{\sigma,1,2}( 1_{F_{k,1,2}(I)} )(x,t) \phi(x,t) \,d\mu(x,t)\\
&=: B_{1,2,1}(k,I)+B_{1,2,2}(k,I),
\end{align*}
where $\mathsf{P}_{\sigma,1,2}$ is the dual operator of $\mathsf{P}^{*}_{\mu,1,2}$.

Hence we obtain that 
\begin{align*}
B_{1,2}
&\leq 2\sum_{k\in\mathbb{Z}}  \sum_{\substack{I\in\mathcal{I}_{k,1,2}\\ \sigma(F_{k,1,2}(I))\geq\delta_1 \sigma(I) }}
B_{1,2,1}(k,I)^2\sigma(F_{k,1,2}(I)) \\
&\qquad +2\sum_{k\in\mathbb{Z}}  \sum_{\substack{I\in\mathcal{I}_{k,1,2}\\ \sigma(F_{k,1,2}(I))\geq\delta_1 \sigma(I) }}
B_{1,2,2}(k,I)^2\sigma(F_{k,1,2}(I))\\
&=:B_{1,2,1}+B_{1,2,2}.
\end{align*}
We claim that
\begin{eqnarray}
B_{1,2,1} & \lesssim & \delta_1^{-2}\mathcal{F}^2\left\Vert \phi\right\Vert_{L^2(M_{+};\mu)}^2; \\\label{e:B121estimate}
B_{1,2,2} & \lesssim & \delta_1^{-2}\left(\mathcal{F}^2+\mathcal{B}^2\right)\left\Vert \phi\right\Vert_{L^2(M_{+};\mu)}^2; \label{e:B122estimate}
\end{eqnarray}
and hence
\begin{equation*}
B_{1,2}\lesssim \left(\mathcal{F}^2+\mathcal{B}^2\right)\left\Vert \phi\right\Vert_{L^2(M_{+};\mu)}^2.
\end{equation*}

We now consider the term $B_{1,2,1}$. As for $B_{1,2,1}(k,I)$, by noting that $\sigma(I)\geq \sigma(F_{k,1,2}(I))\geq\delta_1 \sigma(I) $ and that the operator $\mathsf{P}_{\sigma,1,2}$ is a positive operator and for a positive function $f$,
 we have 
 $$\mathsf{P}_{\sigma,1,2}(f)(x,t)=\int_M \mathsf{P}_{t,1,2}(x,y)f(x,y)d\sigma(y)
 \ls \int_M \mathsf{P}_{t}(x,y)f(x,y)d\sigma (y)=\mathsf{P}_{\sigma}(f)(x,t).$$
Hence,
 \begin{align*}
&B_{1,2,1}(k,I)\\
&\leq \delta_1^{-1}\frac{1}{\sigma(I)}  \int_{3\hat{I}\backslash \hat{\Omega}_{k+\ell_1+1,1,2} } \mathsf{P}_{\sigma,1,2}( 1_{I} )(x,t) \phi_1(x,t) \,d\mu(x,t)\\
&\ls  \delta_1^{-1}\frac{1}{\sigma(I)} \left(\int_{3\hat{I}\backslash \hat{\Omega}_{k+\ell_1+1,1,2} } |\mathsf{P}_{\sigma}( 1_{I} )(x,t)|^2  \,d\mu(x,t)\right)^{\frac{1}{2}}\left(\int_{3\hat{I}\backslash \hat{\Omega}_{k+\ell_1+1,1,2} } |\phi
(x,t)|^2  \,d\mu(x,t)\right)^{\frac{1}{2}}  \\
&\leq \delta_1^{-1}\mathcal{F} \frac{1}{\sigma(I)^{\frac{1}{2}}} \left(\int_{3\hat{I}\backslash \hat{\Omega}_{k+\ell_1+1,1,2} } |\phi(x,t)|^2  \,d\mu(x,t)\right)^{\frac{1}{2}},
\end{align*}
where the last inequality follows from the forward testing condition for $\mathsf{P}_{t}$. Hence,
\begin{align*}
B_{1,2,1}&\leq 2\delta_1^{-2}\mathcal{F}^2\sum_{k\in\mathbb{Z}}  \sum_{\substack{I\in\mathcal{I}_{k,1,2}\\ \sigma(F_{k,1,2}(I))\geq\delta_1 \sigma(I) }}
 \frac{1}{\sigma(I)} \int_{3\hat{I}\backslash \hat{\Omega}_{k+\ell_1+1,1,2} } |\phi(x,t)|^2  \,d\mu(x,t)\sigma(F_{k,1,2}(I))\\
&\leq 2\delta_1^{-2}\mathcal{F}^2
  \int_{M_{+}} |\phi(x,t)|^2 \sum_{k\in\mathbb{Z}}  \sum_{\substack{I\in\mathcal{I}_{k,1,2}\\ \sigma(F_{k,1,2}(I))\geq\delta_1 \sigma(I) }} 
  1_{3\hat{I}\backslash \hat{\Omega}_{k+\ell_1+1,1,2}}(x,t)  \,d\mu(x,t)\\
&\lesssim \delta_1^{-2}\mathcal{F}^2
  \int_{M_{+}} |\phi(x,t)|^2   \,d\mu(x,t),
\end{align*}
where the last inequality follows from the fact that
$$ \left\|\sum_{k\in\mathbb{Z}}  \sum_{\substack{I\in\mathcal{I}_{k,1,2}\\ \sigma(F_{k,1,2}(I))\geq\delta_1 \sigma(I) }} 
  1_{3\hat{I}\backslash \hat{\Omega}_{k+\ell_1+1,1,2}}(x,t)\right\|_{\infty} \lesssim 1, $$
which is a consequence of the bounded overlaps of the Whitney cubes.  Thus, we have that  $B_{1,1,1}\lesssim \delta_1^{-2}\mathcal{F}^2 \left\Vert \phi\right\Vert_{L^2(M_{+}^2;\mu)}^2$ proving \eqref{e:B21estimate}.

We now estimate $B_{1,2,2}$, which is bounded by
\begin{align}\label{B122}
\sum_{k\in\mathbb{Z}}  \sum_{\substack{I\in\mathcal{I}_{k,1,2}\\ \sigma(F_{k,1,2}(I))\geq\delta_1 \sigma(I) }}
\frac{2\delta_1^{-1}}{\sigma(I)}  \left(\int_{3\hat{I} \cap  \hat{\Omega}_{k+\ell_1+1,1,2}} \mathsf{P}_{\sigma,1,2}( 1_{F_{k,1,2}(I)} )(x,t) \phi(x,t) \,d\mu(x,t) \right)^2. 
\end{align}
To continue, we decompose 
\begin{align}\label{B122 decom}
 3\hat{I}\cap  \hat{\Omega}_{k+\ell_1+1,1,2}=\bigcup_{J} \{ \hat{J}: J\subset 3I, J\in \mathcal{I}_{k+\ell_1+1,1,2}  \}.
\end{align}
Since $J\in \mathcal{I}_{k+\ell_1+1} $, we have $3J\subset {\Omega}_{k+\ell_1+1,1,2}\subset \R^m \backslash K$ and hence $d(x,K)\ge \ell(J)$ for any $x\in J$.
Thus, for $(x,t)\in\hat{J}$, it follows $|x|\approx |x_J|$ and $t+d(y,x)|\approx \ell(J)+d(y,x_J)$.
Moreover, we have that for $(x,t)\in\hat{J}$,
\begin{equation}\label{claim P12}
\begin{aligned}
&\mathsf{P}_{\sigma,1,2}(1_{F_{k,1,2}(I)})(x,t) \\
&=\int_{\R^m \backslash K} \frac{1}{t^n|x|^{m-2}}\Big(\frac{t}{t+d(x,y)}\Big)^{n+ 1} 1_{F_{k,1,2}(I)}(y)d\sigma y\\
&\approx  \frac{t}{\ell(J)}\int_{\R^m \backslash K} \frac{1}{\ell(J)^n|x_J|^{m-2}}\Big(\frac{\ell(J)}{\ell(J)+d(x_J,y)}\Big)^{n+ 1}  1_{F_{k,1,2}(I)}(y)d\sigma y\\
&=\frac{t}{\ell(J)}\mathsf{P}_{\sigma,1,2}(1_{F_{k,1,2}(I)})(x_J,\ell(J)), 
\end{aligned}
\end{equation}
where the implicit constants are independent of $x$, $t$ and $I$.

From \eqref{claim P12} we obtain that
\begin{align}
&\int_{\hat{J}}\mathsf{P}_{\sigma,1,2}(1_{F_{k,1,2}(I)})(x,t)\phi(x,t)\,d\mu(x,t)\nonumber \\
&\approx  
\mathsf{P}_{\sigma,1,2}(1_{F_{k,1,2}(I)})(x_J,\ell(J)) \int_{\hat{J}} \frac{t}{\ell(J)} \phi(x,t)\,d\mu(x,t)\nonumber \\
&\approx  
\mathsf{P}_{\sigma,1,2}(1_{F_{k,1,2}(I)})(x_J,\ell(J)) \int_{\hat{J}} \frac{1}{t\ell(J)} \phi(x,t)\,d\tilde{\mu}(x,t) \nonumber\\
&\approx  
\int_{\hat{J}}  \mathsf{P}_{\sigma,1,2}(1_{F_{k,1,2}(I)})(x,t)  \,d\tilde{\mu}(x,t) \cdot \frac{1}{\tilde{\mu}(\hat{J})} \cdot \frac{1}{\ell(J)}\int_{\hat{J}} \frac{1}{t} \phi(x,t)\,d\tilde{\mu}(x,t) \nonumber\\
&\lesssim \int_{\hat{J}}  \mathsf{P}_{\sigma,1,2}(1_{I})(x,t) \frac{1}{t} d\tilde{\mu}(x,t) \cdot \frac{1}{\tilde{\mu}(\hat{J})} \cdot\int_{\hat{J}} \frac{1}{t} \phi(x,t)\,d\tilde{\mu}(x,t), \label{B12 e2}
\end{align}
where the last inequality follows since $\mathsf{P}_{\sigma,1,2}$ is a positive operator, and $d\tilde{\mu}(x,t) =t^2\,d\mu(x,t)$.

From \eqref{B122}, the decomposition \eqref{B122 decom} and the inequality \eqref{B12 e2}, we get that
\begin{align*}
B_{1,2,2}
&\lesssim \delta_1^{-1}\sum_{k\in\mathbb{Z}}  \sum_{\substack{I\in\mathcal{I}_{k,1,2}\\ \sigma(F_{k,1,2}(I))\geq\delta_1 \sigma(I) }}\frac{1}{\sigma(I)}\\
 &\quad \times\left(\sum_{\substack{ J\in \mathcal{I}_{k+\ell_1+1,1,2}\\ J\subset 3I }} \int_{\hat{J}}  \mathsf{P}_{\sigma,1,2}(1_{I})(x,t) \,\frac{d\tilde{\mu}(x,t)}{t}  \cdot \frac{\int_{\hat{J}} \frac{1}{t} \phi(x,t)d\tilde{\mu}(x,t)}{\tilde{\mu}(\hat{J})}  \right)^2. 
\end{align*}
Let $\alpha(J), \mathcal{G}$ and $M_{\tilde{\mu}}$ be defined in subsection \ref{T11} .
In the sum over $\mathcal{I}_{k+\ell_1+1,1,2}$, we denote $I_1=I$ and $I_i, 2\le i\le m^3$ with $I_i\cap I\not= \emptyset$ and $\overset \circ {I_i}\cap \overset \circ I=\emptyset$. The union of these intervals is $3I$. Therefore,
\begin{align*}
B_{1,2,2}&\lesssim \delta_1^{-1}\sum_{k\in\mathbb{Z}}  \sum_{\substack{I\in\mathcal{I}_{k,1,2}\\ \sigma(F_k(I))\geq\delta_1 \sigma(I) }}
\frac{1}{\sigma(I)}  \left(\sum_{\theta=1}^{m^3} \sum_{\substack{ J\in \mathcal{I}_{k+\ell_1+1,1,2}\\ J\subset I_\theta, \pi_\mathcal{G}J=\pi_\mathcal{G} I_\theta}} \int_{\hat{J}}  \mathsf{P}_{\sigma,1,2}(1_{I})(x,t) \,\frac{d\tilde{\mu}(x,t)}{t}  \cdot \alpha(J) \right)^2\\
&\quad+\delta_1^{-1}\sum_{k\in\mathbb{Z}}  \sum_{\substack{I\in\mathcal{I}_{k,1,2}\\ \sigma(F_k(I))\geq\delta_1 \sigma(I) }}
\frac{1}{\sigma(I)}\left(\sum_{\theta=1}^{m^3} \sum_{\substack{ J\in \mathcal{I}_{k+\ell_1+1,1,2}\\ J\subset I_\theta, \pi_\mathcal{G}J\subsetneq\pi_\mathcal{G} I_\theta}} \int_{\hat{J}}  \mathsf{P}_{\sigma,1,2}(1_{I})(x,t) \frac{ \,d\tilde{\mu}(x,t)} {t}\cdot \alpha(J) \right)^2 \\
&=: B_{1,2,21}+B_{1,2,22}.
\end{align*}

To prove \eqref{e:B122estimate}, it suffices to provide the following estimates:
\begin{eqnarray}
B_{1,2,21} & \lesssim & \delta_1^{-2} \mathcal{B}^2  \|\phi\|_{L^2(M_{+};\mu)}^2;\label{e:B1221estimate}\\
B_{1,2,22} & \lesssim & \delta_1^{-1}\mathcal{F}^2\|\phi\|_{L^2(M_{+};\mu)}^2.\label{e:B1222estimate}
\end{eqnarray}
We now estimate the term $B_{1,2,21}$,
\begin{align*}
&B_{1,2,21}\\
&\lesssim \delta^{-1}\sum_{\theta=1}^{m^3}\sum_{k\in\mathbb{Z}}  \sum_{\substack{I\in\mathcal{I}_{k,1,2}\\ \sigma(F_{k,1,2}(I))\geq\delta_1 \sigma(I) }}
\frac{1}{\sigma(I)}  \alpha(J)^2\left( \sum_{\substack{ J\in \mathcal{I}_{k+\ell_1+1,1,2}\\ J\subset I_\theta, \pi_\mathcal{G}J=\pi_\mathcal{G} I_\theta}} \int_{\hat{J}}  \mathsf{P}_{\sigma,1,2}(1_{I})(x,t) \,\frac{d\tilde{\mu}(x,t)}{t}   \right)^2\\
&\lesssim \delta^{-1}\sum_{\theta=1}^{m^3}\sum_{k\in\mathbb{Z}}  \sum_{\substack{I\in\mathcal{I}_{k,1,2}\\ \sigma(F_{k,1,2}(I))\geq\delta_1 \sigma(I) }}
\frac{1}{\sigma(I)}  \alpha(\pi_{\mathcal{G}}I_\theta)^2\left( \sum_{\substack{ J\in \mathcal{I}_{k+\ell_1+1,1,2}\\ J\subset I_\theta, \pi_\mathcal{G}J=\pi_\mathcal{G} I_\theta}} \int_{\hat{J}}  \mathsf{P}_{\sigma,1,2}(1_{I})(x,t) t \,d\mu(x,t)  \right)^2\\
&\lesssim \delta^{-1}\sum_{\theta=1}^{m^3}\sum_{k\in\mathbb{Z}}  \sum_{\substack{I\in\mathcal{I}_{k,1,2}\\ \sigma(F_{k,1,2}(I))\geq\delta_1 \sigma(I) }}
\frac{1}{\sigma(I)}  \alpha(\pi_{\mathcal{G}}I_\theta)^2\left( \int_{I}  \mathsf{P}^{*}_{\mu,1,2}(t 1_{\hat{I_\theta}})(y)  \,d\sigma(y)  \right)^2\\
&\lesssim \delta^{-1}\sum_{\theta=1}^{m^3}\sum_{k\in\mathbb{Z}}  \sum_{\substack{I\in\mathcal{I}_{k,1,2}\\ \sigma(F_{k,1,2}(I))\geq\delta_1 \sigma(I) }}
\frac{1}{\sigma(I)}  \alpha(\pi_{\mathcal{G}}I_\theta)^2\, \sigma(I)   \int_{3I}  \mathsf{P}^{*}_{\mu}(t 1_{\hat{I_\theta}})(y)^2  \,d\sigma(y)    \\
&\lesssim\delta^{-1}\mathcal{B}^2\sum_{\theta=1}^{m^3}\sum_{k\in\mathbb{Z}}  \sum_{\substack{I\in\mathcal{I}_{k,1,2}\\ \sigma(F_{k,1,2}(I))\geq\delta_1 \sigma(I) }}
  \alpha(\pi_{\mathcal{G}}I_\theta)^2\,    \tilde{\mu}(\hat{I}_\theta)   \\
&= \delta^{-1}\mathcal{B}^2\sum_{\theta=1}^{m^3}  \sum_{G\in\mathcal{G}}  \alpha(G)^2  \sum_{k\in\mathbb{Z}}  \sum_{\substack{I\in\mathcal{I}_{k,1,2}\\ \sigma(F_{k,1,2}(I))\geq\delta_1 \sigma(I)\\  \pi_{\mathcal{G}} I_\theta =G}}
  \tilde{\mu}(\hat{I}_\theta),
\end{align*}
where the last inequality follows from the testing condition for $\mathsf{P}^{*}$.

We point out that for each dyadic cube $I$, the set
$$ \left\{k\in\mathbb{Z}:\ I\in\mathcal{I}_{k,1,2}, \sigma(F_{k,1,2}(I)) \geq \delta \sigma(I)\right\} $$
consists of at most $\delta_1^{-1}$ consecutive integers. Actually, that the integers
in this set are consecutive follows from the nested property of the collections $\mathcal{I}_k$.
Moreover, note that for each fixed $I$, the sets $F_{k,1,2}(I)\subset I$ are pairwise disjoint (with respect to $k$), and 
for each $k$,  $\sigma(F_{k,1,2}(I)) \geq \delta_1 \sigma(I)$. Hence, there are at most $\delta_1^{-1}$ such integers $k$.

As a consequence, we obtain that
\begin{align*}
B_{1,2,21}&\leq C\delta_1^{-2} \mathcal{B}^2\sum_{G\in\mathcal{G}}  \alpha(G)^2  
  \tilde{\mu}(\hat{G})\leq C\delta_1^{-2} \mathcal{B}^2  \|\phi\|_{L^2(M_{+};\mu)}^2,
\end{align*}
where the last inequality follows from the maximal inequality \eqref{maximal function}.  This gives \eqref{e:B1221estimate}.

We now turn to the estimate $B_{1,2,22}$,
\begin{align*}
&B_{1,2,22}\\
&\lesssim\delta_1^{-1}\sum_{\theta=1}^{m^3} \sum_{k\in\mathbb{Z}}  \sum_{\substack{I\in\mathcal{I}_{k,1,2}\\ \sigma(F_{k,1,2}(I))\geq\delta \sigma(I) }}
\frac{1}{\sigma(I)}  \left(\sum_{\substack{ J\in \mathcal{I}_{k+\ell_1+1,1,2}\\ J\subset I_\theta\\ \pi_\mathcal{G}J\subsetneq\pi_\mathcal{G} I_\theta}} \int_{\hat{J}}  \mathsf{P}_{\sigma,1,2}(1_{I})(x,t) \,\frac{d\tilde{\mu}(x,t)}{t}  \cdot \alpha(J) \right)^2 \\
&\lesssim\delta_1^{-1}\sum_{\theta=1}^{m^3} \sum_{k\in\mathbb{Z}}  \sum_{\substack{I\in\mathcal{I}_{k,1,2}\\ \sigma(F_{k,1,2}(I))\geq\delta \sigma(I) }}
\frac{1}{\sigma(I)}  \sum_{\substack{ J\in \mathcal{I}_{k+\ell_1+1,1,2}\\ J\subset I_\theta\\ \pi_\mathcal{G}J\subsetneq\pi_\mathcal{G} I_\theta}} \left[\int_{\hat{J}}  \mathsf{P}_{\sigma,1,2}(1_{I})(x,t)\, \frac{d\tilde{\mu}(x,t)}{t} \right]^{2}\tilde{\mu}(\hat{J})^{-1}  \\
&\quad\quad\times  \sum_{\substack{ J\in \mathcal{I}_{k+\ell_1+1,1,2}\\ J\subset I_\theta\\ \pi_\mathcal{G}J\subsetneq\pi_\mathcal{G} I_\theta}} \tilde{\mu}(\hat{J}) \alpha(J)^2,
\end{align*}
where the last inequality follows from Cauchy--Schwarz inequality. Next, from the Cauchy--Schwarz inequality, the definition of $\tilde{\mu}$ and the testing condition, we have
\begin{align*}
  \sum_{\substack{ J\in \mathcal{I}_{k+\ell_1+1,1,2}\\ J\subset I_\theta\\ \pi_\mathcal{G}J\subsetneq\pi_\mathcal{G} I_\theta}} \left[\int_{\hat{J}}  \mathsf{P}_{\sigma,1,2}(1_{I})(x,t) \,\frac{d\tilde{\mu}(x,t)}{t} \right]^{2}\tilde{\mu}(\hat{J})^{-1}   & \leq \sum_{\substack{ J\in \mathcal{I}_{k+\ell_1+1,1,2}\\ J\subset I_\theta\\ \pi_\mathcal{G}J\subsetneq\pi_\mathcal{G} I_\theta}} \int_{\hat{J}}  \mathsf{P}_\sigma(1_{I})(x,t)^2  \,d\mu(x,t)\\
& \leq \mathcal{F}^2 \sigma(I),
\end{align*}
which implies that
\begin{align}
B_{1,2,22}
&\lesssim\mathcal{F}^2\delta_1^{-1}\sum_{\theta=1}^{m^3} \sum_{k\in\mathbb{Z}}  \sum_{\substack{I\in\mathcal{I}_{k,1,2}\\ \sigma(F_{k,1,2}(I))\geq\delta \sigma(I) }}
\frac{1}{\sigma(I)}   \sigma(I)  \sum_{\substack{ J\in \mathcal{I}_{k+\ell_1+1,1,2}\\ J\subset I_\theta\\ \pi_\mathcal{G}J\subsetneq\pi_\mathcal{G} I_\theta}} \tilde{\mu}(\hat{J}) \alpha(J)^2\nonumber\\
&\lesssim\mathcal{F}^2\delta_1^{-1} \sum_{k\in\mathbb{Z}}  \sum_{\substack{I\in\mathcal{I}_{k,1,2}\\ \sigma(F_{k,1,2}(I))\geq\delta \sigma(I) }}
\sum_{\substack{ J\in \mathcal{I}_{k+\ell_1+1,1,2}\\ J\subset I_\theta, \pi_\mathcal{G}J\subsetneq\pi_\mathcal{G} I_\theta}} \tilde{\mu}(\hat{J}) \alpha(\pi_\mathcal{G}J)^2.\label{B1222}
\end{align}
As a similar result of Lemma \ref{L}, we get that 
\begin{align*}
B_{1,2,22}
&\lesssim \mathcal{F}^2\delta^{-1} \sum_{I\in \mathcal{G}} \tilde{\mu}(\hat{I}) \alpha(I)^2 \lesssim\mathcal{F}^2\delta^{-1}  \|\phi\|_{L^2(M_{+};\mu)}^2
\end{align*}
which is \eqref{e:B1222estimate}.

\subsubsection{Term $T_{1,3}$}
The estimate of $T_{1,3}$ is similar with the estimate of $T_{1,2}$, we leave details to the reader.

\subsubsection{Term $T_{1,4}$}
The estimates of  $T_{1,3}$ is similar with the estimate of $T_{1,2}$, if we have the following results.

\begin{lem}\label{lem P*14}
There exists a positive constant $C_{1,4}$ such that
\begin{align}
\mathsf{P}^{*}_{\mu,1,4}(\phi\cdot 1_{(3\hat{I})^c} )(x) < C_{1,4}2^k
\end{align}
for all $x\in I$, $I\in\mathcal{I}_{k,1,4}$ and $k\in\mathbb{Z}$.
\end{lem}
\begin{proof}
Note that $I$ is the Whitney cubes, satisfying $3I\subset \Omega_k$ and 
$9C_1\delta^{-1}I\not\subset\Omega_k$. We now choose $z\in 9C_1\delta^{-1}I\cap \Omega_k^c$. Then we obtain that
$\ell(I)<d(z,x)<5C_1\delta^{-1}\ell(I)$. Recall that $\mathsf{P}^{*}_{\mu,1,4}$ is the operator associated to the integral kernel
$ \mathsf{P}_{t,1,4}(x,y)$.
Since $x\in I$ and $3I\subset \Omega_k\subset \R^m\backslash K$, we have $d(x,K)\ge \ell(I)$.
For $z\in 9C_1\delta^{-1}I\cap \Omega_k^c$, it is clear that
$$|z|\approx 1+d(z,k)\lesssim 1+d(z,x)+d(x,K)\lesssim 1+d(x,K) \approx |x|.$$
Hence for $z\in 9C_1\delta^{-1}I\cap \Omega_k^c$ and for every $y$ with $(y,t)\not\in 3\hat{I}$,
there holds
\begin{equation*}
\begin{aligned}
\mathsf{P}_{t,1,4}(y,x)&=\frac{1}{t^n|y|^{m-2}|x|^{m-2}}\Big(\frac{t}{t+|y|+|x|}\Big)^{n+1} \\
&\leq C_{1,4}\frac{1}{t^n|y|^{m-2}|z|^{m-2}}\Big(\frac{t}{t+|y|+|z|}\Big)^{n+1}= C_{1,4}\mathsf{P}_{t,1,4}(y,z).
\end{aligned}
\end{equation*}
Now we multiply it by $\phi(y,t) 1_{(3\hat{I})^c}$ and then integrate with respect
to $d\mu(y,t)$. As a consequence, we have
\begin{align*}
\mathsf{P}^{*}_{\mu,1,2}(\phi\cdot 1_{(3\hat{I})^c} )(x)\leq C_{1,4} \mathsf{P}^{*}_{\mu,1,2}(\phi_1\cdot 1_{(3\hat{I})^c} )(z) \leq C_{1,4}2^k.
\end{align*}
The proof is complete.
\end{proof}

We have the result as similar with \eqref{claim P12}.
Since $J\in \mathcal{I}_{k+\ell_{1,4}+1} $, we have $3J\subset {\Omega}_{k+\ell_{1,4}+1,1,2}\subset \R^m \backslash K$ and hence $d(x,K)\ge \ell(J)$ for any $x\in J$.
Thus, for $(x,t)\in\hat{J}$, it follows $|x|\approx |x_J|$ and $t+|x|\approx \ell(J)+|x_J|$.
Moreover, we have that for $(x,t)\in\hat{J}$,
\begin{equation*}
\begin{aligned}
&\mathsf{P}_{\sigma,1,4}(1_{F_{k,1,4}(I)})(x,t) \\
&=\int_{\R^m \backslash K} \frac{1}{t^n|y|^{m-2}|x|^{m-2}}
\Big(\frac{t}{t+|y|+|x|}\Big)^{n+1} 1_{F_{k,1,4}(I)}(y)d\sigma (y)\\
&\approx  \frac{t}{\ell(J)}\int_{\R^m \backslash K} \frac{1}{\ell(J)^n|y|^{m-2}|x_J|^{m-2}}
\Big(\frac{\ell(J)}{\ell(J)+|y|+|x_J|}\Big)^{n+1}  1_{F_{k,1,4}(I)}(y)d\sigma (y)\\
&=\frac{t}{\ell(J)}\mathsf{P}_{\sigma,1,4}(1_{F_{k,1,4}(I)})(x_J,\ell(J)), 
\end{aligned}
\end{equation*}
where the implicit constants are independent of $x$, $t$ and $I$.

\subsection{Estimate for $T_{2}$}
By the definition of $P^*$ as in \eqref{P*}, 
we have that for $x\in \R^n\backslash K$,
\begin{align*}
 \mathsf{P}^{*}_\mu(\phi)(x)
 &=  \int_{M_{+}} \mathsf{P}_t(y,x)\phi(y,t) \,d\mu(y,t)\\
 &\le  \int_{M_{+}} \bigg(\frac{1}{t^m}\Big(\frac{t}{t+d(y,x)}\Big)^{m+ 1} +\frac{1}{t^n}\Big(\frac{t}{t+d(y,x)}\Big)^{n+ 1} \\
 &\qquad\qquad +\frac{1}{t^n|y|^{m-2}}\Big(\frac{t}{t+d(y,x)}\Big)^{n+ 1} + \frac{1}{t^m|x|^{n-2}}\Big(\frac{t}{t+d(y,x)}\Big)^{m+ 1}\bigg)\phi_1(y,t) \,d\mu(y,t)\\
 &=:\mathsf{P}^{*}_{\mu,2,1}(\phi_1)(x)+\mathsf{P}^{*}_{\mu,2,2}(\phi_1)(x)+\mathsf{P}^{*}_{\mu,2,3}(\phi)(x),
\end{align*}
where $\mathsf{P}^{*}_{\mu,2,1}$ is the operator associated to the integral kernel
$$ \mathsf{P}_{t,2,1}(y,x)=\frac{1}{t^m}\Big(\frac{t}{t+d(y,x)}\Big)^{m+ 1} ; $$ 
where $\mathsf{P}^{*}_{\mu,2,2}$ is the operator associated to the integral kernel
$$ \mathsf{P}_{t,2,2}(y,x)=\frac{1}{t^n}\Big(\frac{t}{t+d(y,x)}\Big)^{n+ 1} ; $$ 
 $\mathsf{P}^{*}_{\mu,2,3}$ is the operator associated to the integral kernel
$$ \mathsf{P}_{t,2,3}(y,x)=\frac{1}{t^n|y|^{m-2}}\Big(\frac{t}{t+d(y,x)}\Big)^{n+ 1}$$
and $\mathsf{P}^{*}_{\mu,2,4}$ is the operator associated to the integral kernel
$$ \mathsf{P}_{t,2,4}(y,x)=\frac{1}{t^m|x|^{n-2}}\Big(\frac{t}{t+d(y,x)}\Big)^{m+ 1}.$$
Then we have that
\begin{align*}
T_2&\ls \int_{\R^n\backslash K} | \mathsf{P}^{*}_{\mu,2,1}(\phi)(x)|^2 \,d\sigma(x)
+\int_{\R^n\backslash K} | \mathsf{P}^{*}_{\mu,2,2}(\phi)(x)|^2 \,d\sigma(x)\\
&\qquad +\int_{\R^n\backslash K} | \mathsf{P}^{*}_{\mu,2,3}(\phi)(x)|^2 \,d\sigma(x)+\int_{\R^n\backslash K} | \mathsf{P}^{*}_{\mu,2,4}(\phi)(x)|^2 \,d\sigma(x)\\
&=:T_{2,1}+T_{2,2}+T_{2,3}+T_{2,3}.
\end{align*}
The estimates of the term $T_{2,1}$ and $T_{2,2}$ are similar to 
the estimate of the term $T_{1,1}$, we omit the details.
We only show the term $T_{2,3}$ since $T_{2,3}$ and $T_{2,4}$ are similar.

\subsubsection{Term $T_{2,3}$}\label{T22}

Set
\begin{align*}
\Omega_{k,2,3}&:=\left\{ x\in \R^n\backslash K: \mathsf{P}^{*}_{\mu,2,2}(\phi)(x)  >2^k \right\}.
\end{align*}

Let $\ell_2$ be a large constant to determined later. Then we apply the Whitney decomposition
to $\Omega_{k,2,3}$  and denote by $\mathcal{I}_{k,2,3}$ the dyadic cubes. 
Then we have
\begin{align*}
\int_{\R^n\backslash K} | \mathsf{P}^{*}_{\mu,2,3}(\phi)(x)|^2 \,d\sigma(x) 
&= \sum_{k\in\mathbb{Z}}\int_{\Omega_{k+\ell_2,2,3}\backslash\Omega_{k+\ell_2+1,2,3}} | \mathsf{P}^{*}_{\mu,2,3}(\phi)(x)|^2 \,d\sigma(x) \\
&\lesssim  \sum_{k\in\mathbb{Z}} 2^{2k}\sigma(\Omega_{k+\ell_2,2,3}\backslash\Omega_{k+\ell_2+1,2,3})\\
&=\sum_{k\in\mathbb{Z}} 2^{2k} \sum_{I\in\mathcal{I}_{k,2,3
}}\sigma(I\cap (\Omega_{k+\ell_2,2,3}\backslash\Omega_{k+\ell_2+1,2,3})).
\end{align*}
Set $F_{k,2,3}(I):= I \cap (\Omega_{k+\ell_2,2,3}\backslash\Omega_{k+\ell_2+1,2,3})$ with 
$I\in \mathcal{I}_{k,2,3}$. Now let $\delta_2\in(0,1)$, to be chosen sufficiently small.  Then we have
\begin{align*}
&\int_{\R^n\backslash K} | \mathsf{P}^{*}_{\mu,2,3}(\phi)(x)|^2 \,d\sigma(x) \\
&=\sum_{k\in\mathbb{Z}} 2^{2k} \sum_{\substack{I\in\mathcal{I}_{k,2,3}\\ \sigma(F_{k,2,3}(I))<\delta_2 \sigma(I) }}\sigma(F_{k,2,3}(I))
+\sum_{k\in\mathbb{Z}} 2^{2k} \sum_{\substack{I\in\mathcal{I}_{k,2,3}\\ \sigma(F_{k,2,3}(I))\geq\delta_2 \sigma(I) }}\sigma(F_{k,2,3}(I))\\
&=:A_{2,3}+B_{2,3}.
\end{align*}
As for the term $A_{2,3}$, it is the same to $A_{1,1}$. Thus it remains to show that term $B_{2,3}$ can be dominated in terms of the testing conditions. Since
$$ \mathsf{P}_{t,2,3}(y,x)=\frac{1}{t^n|y|^{m-2}}\Big(\frac{t}{t+d(y,x)}\Big)^{n+ 1} , $$ we use the same proof of Lemma \ref{lem P*} to get the following lemma.
\begin{lem}\label{lem P*22}
There exists a positive constant $C_2$ such that
\begin{align}
\mathsf{P}^{*}_{\mu,2,3}(\phi\cdot 1_{(3\hat{I})^c} )(x) < C_22^k
\end{align}
for all $x\in I$, $I\in\mathcal{I}_{k,2,3}$ and $k\in\mathbb{Z}$.
\end{lem}

Now for $I\in\mathcal{I}_{k,2,3}$ with $\sigma(F_{k,2,3}(I))\geq\delta_2 \sigma(I) $ and for each $x\in F_{k,2,3}(I)$, it follows from
the above lemma that
\begin{align*}
\mathsf{P}^{*}_{\mu,2,3}(\phi\cdot 1_{3\hat{I}} )(x) &=
\mathsf{P}^{*}_{\mu,2,3}(\phi )(x) -  \mathsf{P}^{*}_{\mu,2,3}(\phi\cdot 1_{(3\hat{I})^c} )(x) 
\geq 2^{k+\ell_2} - C_22^k,
\end{align*}
where $C_2$ is the constant from Lemma \ref{lem P*22}.
By choosing $\ell_2$ such that $2^{\ell_2}>C_2+1$, we obtain that 
\begin{align*}
\mathsf{P}^{*}_{\mu,2,3}(\phi\cdot 1_{3\hat{I}} )(x) \geq 2^k. 
\end{align*}
Hence,
\begin{align*}
2^k &\leq \frac{1}{\sigma(F_{k,2,3}(I))} \int _{F_{k,2,3}(I)} \mathsf{P}^{*}_{\mu,2,3}(\phi\cdot 1_{3\hat{I}} )(x)\,d\sigma(x) \\
&=\frac{1}{\sigma(F_{k,2,3}(I))}  \int_{3\hat{I}\backslash \hat{\Omega}_{k+\ell_2+1,2,3} } \mathsf{P}_{\sigma,2,3}( 1_{F_{k,2,3}(I)} )(x,t) \phi(x,t) \,d\mu(x,t)\\
&\quad+\frac{1}{\sigma(F_{k,2,3}(I))}  \int_{3\hat{I} \cap  \hat{\Omega}_{k+\ell_2+1,2,3}} \mathsf{P}_{\sigma,2,3}( 1_{F_{k,2,3}(I)} )(x,t) \phi(x,t) \,d\mu(x,t)\\
&=: B_{2,3,1}(k,I)+B_{2,3,2}(k,I),
\end{align*}
where $\mathsf{P}_{\sigma,2,3}$ is the dual operator of $\mathsf{P}^{*}_{\mu,2,3}$.  This gives that
\begin{align*}
B_{2,3}
&\leq 2\sum_{k\in\mathbb{Z}}  \sum_{\substack{I\in\mathcal{I}_{k,2,3}\\ \sigma(F_{k,2,3}(I))\geq\delta_2 \sigma(I) }}
B_{2,3,1}(k,I)^2\sigma(F_{k,2,3}(I)) \\
&\qquad +2\sum_{k\in\mathbb{Z}}  \sum_{\substack{I\in\mathcal{I}_{k,2,3}\\ \sigma(F_{k,2,3}(I))\geq\delta_2 \sigma(I) }}
B_{2,2,3}(k,I)^2\sigma(F_{k,2,3}(I))\\
&=:B_{2,3,1}+B_{2,3,2}.
\end{align*}
The estimate of the term $B_{2,3,1}$ is similar to 
the  estimate of the term $B_{1,1,1}$, we have
$$B_{2,3,1}  \lesssim  \delta_2^{-2}\mathcal{F}^2\left\Vert \phi\right\Vert_{L^2(M_{+};\mu)}^2.$$
We only show that the term $B_{2,3,2}$ satisfies the following inequality
\begin{equation}\label{e:B222estimate}
B_{2,3,2} \lesssim  \delta_2^{-2}\left(\mathcal{F}^2+\mathcal{B}^2\right)\left\Vert \phi\right\Vert_{L^2(M_{+};\mu)}^2
\end{equation}
and then
\begin{equation*}
B_{2,3}\lesssim \left(\mathcal{F}^2+\mathcal{B}^2\right)\left\Vert \phi\right\Vert_{L^2(M_{+};\mu)}^2.
\end{equation*}

We now estimate $B_{2,3,2}$, which is bounded by
\begin{align}\label{B222}
B_{2,3,2}&\lesssim\delta_2^{-1}\sum_{k\in\mathbb{Z}}  \sum_{\substack{I\in\mathcal{I}_{k,2,3}\\ \sigma(F_{k,2,3}(I))\geq\delta_2 \sigma(I) }}
\frac{1}{\sigma(I)}  \left(\int_{3\hat{I} \cap  \hat{\Omega}_{k+\ell_2+1,2,3}} \mathsf{P}_{\sigma,2,3}( 1_{F_{k,2,3}(I)} )(x,t) \phi(x,t) \,d\mu(x,t) \right)^2. 
\end{align}
To continue, we decompose 
\begin{align}\label{B222 decom}
 3\hat{I}\cap  \hat{\Omega}_{k+\ell_2+1,2,3}=\bigcup_{J} \{ \hat{J}: J\subset 3I, J\in \mathcal{I}_{k+\ell_2+1,2,3}  \}.
\end{align}
Note that for such $J$, $3J\cap F_{k,2,3}(I)=\emptyset$. For $(x,t)\in \hat J$ and $y\in F_{k,2,3}(I)$, we have $t+d(x,y)\approx \ell(J)+d(x_J,y)$. 
Moreover, we have that for $(x,t)\in\hat{J}$,
\begin{equation}\label{claim P23}
\begin{aligned}
&\mathsf{P}_{\sigma,2,3}(1_{F_{k,2,3}(I)})(x,t) \\
&=\int_{\R^n \backslash K} \frac{1}{t^n|y|^{m-2}}\Big(\frac{t}{t+d(y,x)}\Big)^{n+ 1} 1_{F_{k,2,3}(I)}(y)d\sigma y\\
&\approx  \frac{t}{\ell(J)}\int_{\R^n \backslash K}\frac{1}{\ell(J)^n|y|^{m-2}}\Big(\frac{\ell(J)}{\ell(J)+d(y,{\color{red}x_J})}\Big)^{n+ 1} 1_{F_{k,2,3}(I)}(y)d\sigma y\\
&=\frac{t}{\ell(J)}\mathsf{P}_{\sigma,2,2}(1_{F_{k,2,3}(I)})(x_J,\ell(J)), 
\end{aligned}
\end{equation}where the implicit constants are independent of $x$, $t$ and $I$.

From \eqref{claim P23},  we use the approach used to prove \eqref{B12 e2} to obtain
\begin{equation}\label{B22 e2}
\begin{aligned}
&\int_{\hat{J}}\mathsf{P}_{\sigma,2,3}(1_{F_{k,2,3}(I)})(x,t)\phi(x,t)\,d\mu(x,t)\\
&\qquad\lesssim \int_{\hat{J}}  \mathsf{P}_{\sigma,2,3}(1_{I})(x,t) \frac{1}{t} d\tilde{\mu}(x,t) \cdot \frac{1}{\tilde{\mu}(\hat{J})} \cdot\int_{\hat{J}} \frac{1}{t} \phi(x,t)\,d\tilde{\mu}(x,t), 
\end{aligned}
\end{equation}
where $d\tilde{\mu}(x,t) =t^2\,d\mu(x,t)$.  By \eqref{B222}, \eqref{B222 decom} and
\eqref{B22 e2}, we obtain that
$$
B_{2,3,2}
\lesssim \delta_2^{-1}\sum_{k\in\mathbb{Z}}  \sum_{\substack{I\in\mathcal{I}_{k,2,3}\\ \sigma(F_{k,2,3}(I))\geq\delta_2 \sigma(I) }}\frac{1}{\sigma(I)}
 \left(\sum_{\substack{ J\in \mathcal{I}_{k+\ell_2+1,2,3}\\ J\subset 3I }} \int_{\hat{J}}  \mathsf{P}_{\sigma,2,3}(1_{I})(x,t) \,\frac{d\tilde{\mu}(x,t)}{t}  \cdot \alpha(J)  \right)^2. 
$$
As a similar method of $B_{1,2,2}$, we have
$$B_{2,3,2} \lesssim  \delta_2^{-2}\left(\mathcal{F}^2+\mathcal{B}^2\right)\left\Vert \phi\right\Vert_{L^2(M_{+};\mu)}^2,$$
which is \eqref{e:B222estimate}.

\subsection{Estimate for $T_{3}$}
By the definition of $P^*$ as in \eqref{P*}, 
we have that for $x\in K$,
\begin{align*}
 \mathsf{P}^{*}_\mu(\phi)(x) &=  \int_{M_{+}} \mathsf{P}_t(y,x)\phi(y,t) \,d\mu(y,t)\\
 &\le  \int_{M_{+}} \bigg(\frac{1}{t^m}\Big(\frac{t}{t+d(y,x)}\Big)^{m+ 1} +\frac{1}{t^n}\Big(\frac{t}{t+d(y,x)}\Big)^{n+ 1}\\
&\qquad +\frac{1}{t^n|y|^{m-2}}\Big(\frac{t}{t+d(y,x)}\Big)^{n+ 1} \bigg)\phi(y,t) \,d\mu(y,t)\\
 &=:\mathsf{P}^{*}_{\mu,3,1}(\phi)(x)+\mathsf{P}^{*}_{\mu,3,2}(\phi)(x)+\mathsf{P}^{*}_{\mu,3,3}(\phi)(x),
\end{align*}
where $\mathsf{P}^{*}_{\mu,3,1}$ is the operator associated to the integral kernel
$$ \mathsf{P}_{t,3,1}(y,x)=\frac{1}{t^m}\Big(\frac{t}{t+d(x,y)}\Big)^{m+ 1}, $$ 
$\mathsf{P}^{*}_{\mu,3,2}$ is the operator associated to the integral kernel
$$ \mathsf{P}_{t,3,2}(y,x)=\frac{1}{t^n}\Big(\frac{t}{t+d(x,y)}\Big)^{n+ 1}  $$ 
and $\mathsf{P}^{*}_{\mu,3,3}$ is the operator associated to the integral kernel
$$ \mathsf{P}_{t,3,3}(y,x)=\frac{1}{t^n|y|^{m-2}}\Big(\frac{t}{t+d(x,y)}\Big)^{n+ 1} . $$

Then we have that
\begin{align*}
T_3&\ls \int_{K} | \mathsf{P}^{*}_{\mu,3,1}(\phi)(x)|^2 \,d\sigma(x)
+\int_{K} | \mathsf{P}^{*}_{\mu,3,2}(\phi)(x)|^2 \,d\sigma(x)+\int_{K} | \mathsf{P}^{*}_{\mu,3,2}(\phi)(x)|^2 \,d\sigma(x)\\
&=:T_{3,1}+T_{3,2}+T_{3,2}.
\end{align*}
The estimates of $T_{3,1}$ and $T_{3,2}$ are similar with the estimates of $T_{1,1}$
and the estimates of $T_{3,3}$ is  similar with the estimate $T_{2,2}$.  We leave the precise details to the reader.
\bigskip
\bigskip

\noindent{\bf Acknowledgement:} The authors would like to express their sincere gratitude to the referees for their careful reading, patient reviewing, valuable corrections and constructive comments, which greatly improved the exposition of this manuscript. The authors would like to thank Chun-Yen Shen for helpful discussions on arXiv version in 2021 of this paper.
\bigskip

\begin{bibdiv}
\begin{biblist}
\normalsize

\bib{BDLW}{article}{
 author={Bui, T.},
 author={Duong, X.},
 author={Li, J.},
 author={Wick, B. D.},
 title={Functional calculus of operators with heat kernel bounds on non-doubling manifolds with ends},
 journal={Indiana Univ. Math. J.},
 volume={69}, 
 number={3},
 date={2020},  
 pages={713--747}
 }

\bib{Chr}{article}{
 author={Christ, M.},
 title={A $T(b)$ theorem with remarks on analytic capacity and the Cauchy integral},
 journal={Colloq. Math.},
 volume={60/61}, 
  number={3},
 date={1990},  
 pages={601--628}
 }

\bib{GS}{article}{
 author={Grigor'yan, A.}, 
 author={Saloff-Coste, L.}, 
 title={Heat kernel on manifolds with ends},
 journal={Ann. Inst. Fourier (Grenoble)},
 volume={59},
 number={5}
 date={2009}, 
 pages={1917--1997}
 }
 
 \bib{GIS}{article}{
 author={Grigor'yan, A.}, 
 author={Ishiwata, S.},
 author={Saloff-Coste, L.}, 
 title={Geometric analysis on manifolds with ends},
 journal={arXiv:2007.15834}
  }
 
 \bib{GPSSU}{article}{
 author={Grigoriadis, C. },
 author={Paparizos, M. },
 author={Sawyer, E.T. },
 author={Shen, C.-Y. },
 author={Uriarte-Tuero, I. },
title={A two weight local Tb theorem for n-dimensional fractional singular integrals},
 journal={arXiv:2011.05637}
}

\bib{Hy}{article}
{author={Hyt\"onen, T.}
title={The two-weight inequality for the Hilbert transform with general measures},
journal={Proc. London Math. Soc.},
volume={117},
number={3},
date={2018},
pages={483--526}
}

\bib{L}{article}{
   author={Lacey, M.},
   title={The Two Weight Inequality for the Hilbert Transform: A Primer,\ \ to appear in the Cora Sadosky memorial volume},
   eprint={arXiv:1312.6163 v3}
  }

\bib{LW}{article}{
   author={Lacey, M.},
   author={Wick, B. D.},
   title={Two weight inequalities for Riesz transforms: uniformly full dimension weights.},
   eprint={arXiv1304.5004v4}
  }

\bib{LSSU}{article}{
   author={Lacey, M.}, 
   author={Sawyer, E.}, 
    author={Shen, C.-Y.}, 
    author={Uriarte-Tuero, I.},
   title={Two-weight inequality for the Hilbert transform: A real variable characterization, I},
    journal={Duke Math. J.},
    volume={163},
    date={2014},
    pages={2795--2820}
  }

\bib{M}{article}{
   author={Muckenhoupt, B.},
   title={Weighted norm inequalities for classical operators},
   conference={
      title={Harmonic analysis in Euclidean spaces},
      address={Proc. Sympos. Pure Math., Williams Coll., Williamstown,
      Mass.},
      date={1978},
   },
   book={
      series={Proc. Sympos. Pure Math., XXXV, Part},
      publisher={Amer. Math. Soc., Providence, R.I.},
   },
   date={1979},
   pages={69--83}
}

\bib{ms}{article}{
   author={Muckenhoupt, B.},
   author={Stein, E. M.},
   title={Classical expansions and their relation to conjugate harmonic
   functions},
   journal={Trans. Amer. Math. Soc.},
   volume={118},
   date={1965},
   pages={17--92}
}

\bib{s88-0}{article} { 
	author = {Sawyer, E.},
	title =  {A characterization of a two-weight norm inequality for maximal operators}, 
	journal = {Studia Math.}, 
	volume = {75},
	date = {1982}, 
	pages = {1--11}

}

\bib{s88}{article}{
   author={Sawyer, E.},
   title={A characterization of two weight norm inequalities for fractional
   and Poisson integrals},
   journal={Trans. Amer. Math. Soc.},
   volume={308},
   date={1988},
   number={2},
   pages={533--545}

}

\bib{SSU}{article}{
   author={Sawyer, E.}, 
   author={Shen, C.-Y.}, 
   author={Uriarte-Tuero, I.},
   title={A two weight theorem for $\alpha$-fractional singular integrals with an energy side condition},
   journal={Rev. Mat. Iberoam.},
   volume={32},
   date={2016},
   pages={79--174}
}
\bib{SW}{article}{
   author={Sawyer, E.}, 
   author={Wheeden R.}, 
    title={Weighted inequalities for fractional integrals on Euclidean and homogeneous spaces},
   journal={Amer. J. Math.},
   volume={114},
   date={1992},
   pages={813--874}
}

\bib{Seo}{article}{
   author={Seo, J.}, 
   title={A characterization of bi-Lipschitz embeddable metric spaces in terms of local bi-Lipschitz embeddability},
   journal={ Math. Res. Lett.},
   volume={18},
   date={2011},
   pages={1179--1202}
}

\end{biblist}
\end{bibdiv}

\end{document}